\def\a{\alpha}
\def\b{\beta}
\def\Z{\mathbb{Z}}
\def\C{\mathbb{C}}
\def\a{{\alpha}}
\def\c{{\mathbb C}}
\def\z{{\mathbb Z}}
\def\bN{{\mathbb N}}
\def\bZ{{\mathbb Z}}
\def\bC{{\mathbb C}}
\def\b{\beta}
\def\um{\underline{m}}
\def\un{\underline{n}}
\def\ur{\underline{r}}
\def\us{\underline{s}}
\def\ui{\underline{i}}
\def\uj{\underline{j}}
\numberwithin{equation}{section}
\newtheorem{theo}{Theorem}[section]
\newtheorem{coro}[theo]{Corollary}
\newtheorem{lemm}[theo]{Lemma}
\newtheorem{rema}{Remark}
\def\a{\alpha}
\def\b{\beta}
\def\l{\lambda}
\def\1{{\bf 1}}
\title{$U(\frak h)$-free modules over the Lie algebras of differential operators}
\date{}
\author{ Munayim Dilxat}
\address{College of Mathematics and System Sciences, Xinjiang University, Urumqi 830046, Xinjiang, China}
\email{18396830969@163.com}
\author{Shoulan Gao}
\address{Department of Mathematics, Huzhou University, Zhejiang Huzhou, 313000, China}
\email{gaoshoulan@zjhu.edu.cn}
\author{Dong Liu}
\address{College of Mathematics and System Sciences, Xinjiang University, Urumqi 830046, Xinjiang, China; Department of Mathematics, Huzhou University, Zhejiang Huzhou, 313000, China}
\email{liudong@zjhu.edu.cn}
\author{Limeng Xia}
\address{Institute of Applied System Analysis, Jiangsu University, Jiangsu Zhenjiang, 212013,
China}\email{xialimeng@ujs.edu.cn}
\thanks{$^\star$ D. Liu, Corresponding author}
\begin{document}

\maketitle

\noindent{{\bf Abstract.} In this paper, we consider some non-weight modules over the Lie algebra of Weyl type.
First, we determine the modules whose restriction to $U(\frak h)$ are free of rank $1$ over the Lie algebra of differential operators on the circle.  Then we determine the necessary and sufficient conditions for the tensor products of quasi-finite highest weight modules and $U(\frak h)$-free modules to be irreducible, and obtain that any two such tensor products are isomorphic if and only if the corresponding highest weight modules and $U(\frak h)$-free modules are isomorphic. Finally, we extend such results to the Lie algebras of differential operators in the general case.

\noindent{\bf Key words:} }Virasoro algebra; differential operator; polynomial module; Weyl algebra

\noindent{\it MSC:} 17B10,17B65, 17B68.
\parskip .001 truein\baselineskip 6pt \lineskip 6pt

\section{Introduction}

\setcounter{section}{1}\setcounter{equation}{0}

Extended symmetries have played an important role in conformal
field theories. This leads one to study $W$ algebras,
which are higher-spin extensions of the Virasoro algebra, and
their superanalogues, if one takes supersymmetry into account, cf.
\cite{BS, K} and the vast literatures therein. A fundamental
example of $W$ algebras, now known as the $W_{1+\infty}$
algebra, appears as the limit of the
$W_N$ algebras, as $N$ goes to $\infty$ \cite{PRS1}. It is well known that the $W_{1+\infty}$ algebra can
also be interpreted as the central extension of the Lie algebra of
differential operators on the circle \cite{KR, PRS2}. Weight modules over $W_{1+\infty}$ were studied sufficiently and have appeared in various models of two-dimensional quantum field theory and integrable systems (see \cite{FKRW, KR, HL, Su}, etc.). However,  there are few papers on non-weight modules over $W_{1+\infty}$, up to now.

Recently, many authors constructed various non-weight modules over some Lie algebras. In particular, J. Nilsson \cite{N1} constructed a class of ${\frak sl}_n$-modules that are free of rank one when restricted to the enveloping algebra of its Cartan subalgebra. This kind of non-weight modules, which are called $U(\frak h)$-free modules, have been extensively studied. In \cite{N1} and a subsequent paper \cite{N2}, J. Nilsson showed that a finite dimensional simple Lie algebra has nontrivial $U(\frak h)$-free modules if and only if it is of the special linear algebra $\frak{sl}({n+1}, \bC)$ or the symplectic algebra $\frak{sp}({2n}, \bC)$ for some $n\ge 1$.  And the idea was exploited and generalized to consider modules over infinite dimensional Lie algebras, such as the Witt algebra (the Virasoro algebra) \cite{LZ, TZ}, the twisted Heisenberg-Virasoro algebra and the $W(2,2)$ algebra \cite{CG}, and so on.
The aim of this paper is to determine such modules for the Lie algebra $W_{1+\infty}$ motivated by \cite{CG} and \cite{TZ}.
We get a kind of new simple modules over $W_{1+\infty}$ but they are not modules of the corresponding associative algebras (see (3.1)).
Moreover, such results can be extended to the Lie algebras of differential operators on the Laurent polynomial rings of multi-variables.

The paper is organized as follows. In Section 2, we recall  some necessary definitions and preliminary results.  In Section 3, we determine all module structures on $U(\frak h)$, see Theorem 3.2. In Section 4, we give the necessary and sufficient conditions for the tensor product of a quasi-finite highest weight module $L(h, c)$ and a $U(\frak h)$-free $\mathcal D$-module $\Omega(\lambda, \epsilon)$ to be irreducible, see Theorem 4.1. Furthermore, we show that two such tensor product modules are isomorphic if and only if the corresponding quasi-finite highest weight modules and $U(\frak h)$-free modules are isomorphic, see Theorem 4.2. Consequently, we obtain a lot of new irreducible non-weight modules over the Lie algebra of differential operators. In Section 5, we construct such modules for the Lie algebras of differential operators in the general case.

Throughout this paper, denote by $\mathbb Z,$ $\mathbb Z_{+},$ $\mathbb N$, $\mathbb C$ and $\mathbb C^*$ the sets of integers, positive integers,
non-negative integers, complex numbers and non-zero complex numbers, respectively. All algebras and modules are over the complex number field $\mathbb{C}$.

\section{Basics}

In this section, we recall some necessary definitions and preliminary results.

\subsection{The Lie algebra of differential operators}

Let $\bC[t, t^{-1}]$
be the Laurent polynomial ring over $\bC$, ${\mathcal D}_{ass}=\hbox{Diff}\,{\mathbb{C}}[\,t, t^{-1}\,]$
the associative algebra of all differential operators over ${\mathbb{C}}[\,t, t^{-1}\,]$,
which has a basis  $\{t^m D^n\mid m\in {\mathbb Z}, \  n\in\mathbb N\}$ with multiplications:
\begin{equation*}\label{def21}
(t^{m_1}D^{ n_1})(t^{m_2}D^{ n_2})
=\sum_{i=0}^{n_1}{n_1\choose i}\,m_2^i\,t^{m_1+m_2}D^{ n_1+n_2-i}
\end{equation*}
for all $m_1, \; m_2\in\mathbb Z, \; n_1, \; n_2\in\mathbb N$, where $D=t\frac d{dt}$.

Let ${\mathcal D}$ be the Lie algebra of ${\mathcal D}_{ass}$ under Lie bracket given by
\begin{equation*}\label{def22}
\bigl[t^{m_1}D^{ n_1}, t^{m_2}D^{ n_2}\,\bigr]
=\sum_{i=0}^{n_1}{n_1\choose i}\,m_2^i\,t^{m_1+m_2}D^{ n_1+n_2-i}-\sum_{j=0}^{n_2}{n_2\choose j}\,m_1^j\,t^{m_1+m_2}D^{ n_1+n_2-j}
\end{equation*}
for all $m_1, \; m_2\in\mathbb Z, \; n_1, \; n_2\in\mathbb N$.

\begin{lemm}\cite{Zhao}\label{lemm11} The Lie algebra $\mathcal D$ is generated by $\{t, t^{-1}, D^2\}$.
\end{lemm}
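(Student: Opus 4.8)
The plan is to show that the Lie subalgebra $\mg$ of $\mathcal D$ generated by $\{t, t^{-1}, D^2\}$ coincides with all of $\mathcal D$, i.e. contains every basis element $t^m D^n$ with $m\in\bZ$, $n\in\bN$. First I would record the commutator $[D^2, t^m] = [D^2, t^m D^0]$, which by the bracket formula equals $\sum_{j} \binom{2}{j} m^j t^m D^{2-j} - 0 = 2m\, t^m D + m^2 t^m$; more generally $[t^k D^\ell, t^m D^0]$ produces elements of the form $\sum_i \binom{\ell}{i} m^i t^{k+m} D^{\ell-i}$. So bracketing $D^2$ against $t^m$ already yields $t^m D$ modulo lower-order (in $D$) terms once we know $t^m \in \mg$; and bracketing $D^2$ against $t^m D$ gives $t^m D^2$ modulo lower order. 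The key mechanism is that $\ad(D^2)$ raises the $D$-degree by at most $1$ while staying at the same $t$-degree, so iterating it builds up arbitrarily high powers of $D$.

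The steps I would carry out in order: (1) observe $t^m\in\mg$ for all $m\in\bZ$, since $\mg$ contains $t$ and $t^{-1}$ and is closed under the associative-looking products $t^{m_1}\cdot t^{m_2} = $ (using that $[t^{m_1}D^0, t^{m_2}D^0]=0$ is unhelpful, so instead) — more carefully, one needs $t^2 = $ something; here note $[t D^0 \text{-type elements}]$ won't give $t^2$ directly, so I would instead first produce $D$ or $tD$ and use $[tD, t^m] = m t^{m+1}$ together with $[t^{-1}D, t^m] = m t^{m-1}$, which lets one move the $t$-degree up and down by $1$, hence reach all $t^m$ once $tD$ (or $t^{-1}D$) is in $\mg$. (2) Produce $tD$: from $[D^2, t] = 2 tD + t$ and $[D^2, t^{-1}] = -2 t^{-1}D + t^{-1}$, combined with $t, t^{-1}\in\mg$, we get $tD, t^{-1}D\in\mg$. (3) Using (2) and the bracket $[t^{\pm 1}D, t^m] = m\, t^{m\pm 1}$, conclude $t^m\in\mg$ for all $m\in\bZ$. (4) Induct on $n$: assuming $t^m D^{n-1}\in\mg$ for all $m$, compute $[D^2, t^m D^{n-1}]$, which by the formula equals $2m\, t^m D^n + (\text{terms } t^m D^{n-1}, t^m D^{n-2})$; since the lower-order terms are already in $\mg$ by the inductive hypothesis, and since $D^2 \in \mg$, we extract $t^m D^n \in \mg$ whenever $m\neq 0$; for $m=0$, obtain $D^n$ from $[t^{-1}D^?, tD^{n-1}]$-type brackets or from $[tD, t^{-1}D^{n-1}]$, which yields $D^n$ plus lower-order terms already in $\mg$. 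Hence $\mg = \mathcal D$.

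The main obstacle I anticipate is the bookkeeping in step (4): the bracket $[D^2, t^m D^{n-1}]$ and the various brackets $[t^{\pm1}D, t^m D^{n-1}]$ each generate a whole tail of lower-$D$-degree terms, and one must be careful that the induction is set up so that all of these tails lie in $\mg$ (which requires the inductive hypothesis to be ``$t^m D^k\in\mg$ for all $m\in\bZ$ and all $k<n$'', not just $k=n-1$), and that the leading coefficient one divides by is nonzero — this is why the $m=0$ case must be handled separately, typically by realizing $D^n$ as a bracket of two elements of strictly smaller $D$-degree with a nonvanishing leading coefficient. Once the nonvanishing of leading coefficients is checked in each case, the argument closes cleanly.
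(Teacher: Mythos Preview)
The paper does not prove this lemma at all: it is stated with a citation to another reference and no argument is given. So there is no proof in the paper to compare your proposal against; your write-up would be supplying an argument the paper omits.

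Your overall strategy is the standard one and is correct: get $tD$ and $t^{-1}D$ from $[D^2,t^{\pm1}]$, use these to sweep through all $t^m$, and then induct on the $D$-degree using $[D^2, t^m D^{n-1}] = 2m\, t^m D^n + m^2 t^m D^{n-1}$ to produce $t^m D^n$ for $m\neq 0$. One concrete slip: the bracket you name for the $m=0$ case, $[tD, t^{-1}D^{n-1}]$, does \emph{not} produce $D^n$. In any bracket $[t^{m_1}D^{n_1}, t^{m_2}D^{n_2}]$ the $i=0$ and $j=0$ terms cancel, so the result has $D$-degree at most $n_1+n_2-1$; with $n_1=1$, $n_2=n-1$ you can only reach degree $n-1$. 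Your vaguer alternative ``$[t^{-1}D^{?}, tD^{n-1}]$-type brackets'' does work provided $?\geq 2$: for instance $[t^{-1}D^2, tD^{n-1}] = (n+1)D^n + (\text{lower order})$, and both inputs lie in $\mg$ by the inductive hypothesis once $n\geq 3$. Equivalently, having already obtained $tD^n\in\mg$ for $m\neq 0$ in the same induction step, one can use $[tD^n, t^{-1}D] = -(n+1)D^n + (\text{lower order})$. Either fix closes the argument; just replace the incorrect bracket in your write-up.
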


Li \cite{Li} proved $H^2({\mathcal D}, \c)=1$
(also see \cite{KR}).   More precisely, we have the following result.

\begin{lemm}\cite{KR, Li}
 Any non-trivial $2$-cocycle on  ${\mathcal D}$ is equivalent to $\phi$:
\begin{equation*}
\phi(t^{m_1}D^{ n_1}, t^{m_2}D^{ n_2})=\begin{cases}&0, \qquad\hbox{\it if}\quad m_1=0,\\
&(-1)^{n_1+1}\,\delta_{m_1+m_2, 0}\,{1\over 2}\,\sum_{i=1}^{m_1}\,(m_1-i)^{n_1}\,i^{n_2}, \qquad\hbox{\it if}\quad m_1>0,\\
&(-1)^{n_1}\,\delta_{m_1+m_2, 0}\,{1\over 2}\,\sum_{i=m_1}^{-1}\,(m_1-i)^{n_1}\,i^{n_2}, \qquad\hbox{\it if}\quad  m_1<0.
\end{cases}
\end{equation*}
\end{lemm}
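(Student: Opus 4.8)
The plan is to compute $H^2(\mathcal D,\mathbb C)$ directly by reducing an arbitrary 2-cocycle $\psi$ to the stated normal form $\phi$ modulo coboundaries, exploiting the $\mathbb Z$-grading $\mathcal D=\bigoplus_{m\in\mathbb Z}\mathcal D_m$ where $\mathcal D_m=\mathrm{span}\{t^mD^n\mid n\in\mathbb N\}$ and the fact (Lemma~\ref{lemm11}) that $\mathcal D$ is generated by $\{t,t^{-1},D^2\}$. First I would observe that, by a standard averaging argument using the adjoint action of the degree-zero element $D=t^0D$, every 2-cocycle is cohomologous to a cocycle of degree zero, i.e. one with $\psi(\mathcal D_{m_1},\mathcal D_{m_2})=0$ unless $m_1+m_2=0$; concretely, if $\psi$ has a nonzero component of degree $m\neq 0$ one kills it by subtracting the coboundary of the linear functional $x\mapsto \tfrac1m\psi(D,x)$ restricted to that component, iterating over degrees. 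This immediately forces the $\delta_{m_1+m_2,0}$ factor.

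Next I would exploit the cocycle identity together with further coboundary adjustments to normalize the diagonal blocks. Write $\psi(t^{-m}D^{n_1},t^mD^{n_2})=f_m(n_1,n_2)$ for $m>0$. Using the bracket formulas from the excerpt, the cocycle condition applied to triples of the form $(t^{a}D^{p},t^{b}D^{q},t^{-a-b}D^{r})$ yields recursion relations expressing $f_m$ in terms of $f_1$, and the antisymmetry of $\psi$ gives $f_m(n_1,n_2)=-f_{-m}(n_2,n_1)$, which is why the $m_1>0$ and $m_1<0$ cases differ by the sign $(-1)^{n_1+1}$ versus $(-1)^{n_1}$. The $m_1=0$ case is handled by noting that $\mathcal D_0$ is abelian and that one can subtract the coboundary of a suitable functional on $\mathcal D_0$ to make $\psi$ vanish there; this uses that $t^0D^{n}=[\,t,\,t^{-1}D^{n+1}/(n+1)+\cdots]$ can be written as a bracket, so $\psi(D^{n_1},D^{n_2})$ is determined by the other components and can be absorbed. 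The explicit closed form $\tfrac12\sum_{i=1}^{m}(m-i)^{n_1}i^{n_2}$ then emerges as the unique solution of the recursion with the correct initial data coming from the central extension on the Virasoro-type piece (the $\mathfrak{sl}_2$ or $\Vir$ subalgebra generated by $t,t^{-1}$ and low powers of $D$).

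The main obstacle is the combinatorial bookkeeping in the second step: the bracket $[t^{m_1}D^{n_1},t^{m_2}D^{n_2}]$ produces a sum over binomial coefficients of elements $t^{m_1+m_2}D^{k}$ of many different orders $k$, so the cocycle identity does not decouple cleanly in the ``differential order'' variable, and one must track how $f_m(n_1,n_2)$ depends polynomially on $(n_1,n_2)$ and $m$ simultaneously. The cleanest route is probably to first restrict $\psi$ to the subalgebra spanned by $\{t^m D\mid m\in\mathbb Z\}\oplus\{t^mD^0\}$ (closely related to $W_\infty$-type or Virasoro-with-abelian-extension pieces) to pin down $f_m$ on low orders, and then bootstrap to general $n_1,n_2$ using that $D^2$ generates together with $t,t^{-1}$. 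Since the statement merely cites \cite{KR, Li}, I would in practice invoke those computations for the hard recursion and only carry out the grading-reduction argument in full detail, then verify directly that the displayed $\phi$ does satisfy the cocycle identity (a finite check using the identity $\sum_{i}(m-i)^n i^k$ Vandermonde-type manipulations), which certifies that the normal form is attained and that it is genuinely nontrivial (not a coboundary) because its restriction to $\mathrm{span}\{t,t^{-1},D,D^2\}$ already gives the nonzero Virasoro central charge.
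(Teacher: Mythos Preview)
The paper does not prove this lemma; it simply quotes it from \cite{KR, Li} with no argument given. So there is no ``paper's own proof'' to compare your proposal against. Your sketch is a reasonable outline of the standard approach (reduce to degree-zero cocycles via the $D$-grading, then solve the resulting recursion on the diagonal values $f_m(n_1,n_2)$), and indeed this is essentially how Li's original computation in \cite{Li} proceeds. But note that your write-up is explicitly incomplete: you yourself say you would ``in practice invoke those computations for the hard recursion'' from \cite{KR, Li}, which means your proposal ultimately defers to the same references the paper cites rather than supplying an independent proof. If the goal is merely to match the paper, a one-line citation suffices; if the goal is a self-contained argument, the recursion step (determining $f_m(n_1,n_2)$ uniquely from $f_1$ and verifying the closed form $\tfrac12\sum_i(m-i)^{n_1}i^{n_2}$) is precisely the nontrivial content and would need to be carried out in full.
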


Let $\widehat{\mathcal D}$ (or $W_{1+\infty}$) denote the universal (one-dimensional) central extension of the Lie algebra ${\mathcal D}$
by the above $2$-cocycle $\phi$ with a central element $C$.  $\widehat{\mathcal D}$ is $\Z$-graded by $\widehat{\mathcal D}_i={\rm Span}_\C\{t^iD^n\mid n\ge0\}\oplus\delta_{i, 0}\C C$ for $i\in\mathbb{Z}$.

A ${\widehat{\mathcal D}}$-module $V$ is called a {highest} (resp.
{lowest) weight module}, if there exists a nonzero $v
\in V_{\lambda}$ such that

1) $\C v$ is a one dimensional ${\widehat{\mathcal D}}_0$-module;

2) ${\widehat{\mathcal D}}_+v=0 $ (resp. ${\widehat{\mathcal D}}_- v=0 $), where ${\widehat{\mathcal D}}_+=\sum_{i>0}{\widehat{\mathcal D}}_i$, ${\widehat{\mathcal D}}_-=\sum_{i<0}{\widehat{\mathcal D}}_i$.

Here we shall note that the highest weight module defined as above is not always quasi-finite.
Quasi-finite highest or lowest weight modules were studied in \cite{KR}.

Next, we define the Verma module, which is a highest weight module with the free action of $U(\widehat{\mathcal D}_-)$  on the highest weight vector (see \cite{Hum}). More precisely, for any $c\in\C$ and $h=(h_1, h_2, \ldots), h_i\in\C, i\in\Z_+$,  let
$\C \1$ be the one-dimensional module over the subalgebra
${\widehat{\mathcal D}}_+\oplus {\widehat{\mathcal D}}_0$ defined by
\begin{equation*}\label{highest}
\widehat{\mathcal D}_+ \1=0,
C \1=c \1, D^k \1=h_k\1.
\end{equation*}
Then we get the induced $\widehat{\mathcal D}$-module, called Verma module:
$$M(h, c)=U(\widehat{\mathcal D})\otimes_{U(\widehat{\mathcal D}_+ +\widehat{\mathcal D}_0)}\C \1.$$

It is well known that the Verma module $M(h, c)$ has a unique maximal
submodule $J(h, c)$ and the corresponding simple quotient module is
denoted by $L(h, c)$. A nonzero weight vector $u'\in M(h, c)$ is
called a singular vector if $\widehat{\mathcal D}_+ u'=0$. It is clear that
$J(h, c)$ is generated by all homogenous singular vectors in $M(h, c)$ not in $\C \1$, and that $M(h, c)=L(h, c)$ if and only if
$M(h, c)$ does not contain any other singular vectors besides those in $\C \1$.

Introduce the generating series
$$
\Delta_h( x ) =
-\sum^\infty_{n = 0} \frac{x^n}{n!} h_n,
$$ which is a formal power series.
A formal power series is called a {\it quasipolynomial} if it is a finite linear combination of functions of the form $p(x)e^{\alpha x}$, where $p(x)$ is a polynomial and $\alpha\in\C$.

The following results were given in \cite{ KR}:
\begin{theo}\cite{KR}\label{simple1}
For any $c\in\C$ and $h\in {\mathcal D}_0^*$, $L(h, c)$ is quasi-finite if and only if  $$
  \Delta_h( x ) = \frac{\phi ( x )}{e^x - 1},
  $$
  where $\phi(x)$ is a quasipolynomial such that $\phi( 0 ) = 0$.

\end{theo}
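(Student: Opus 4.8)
\emph{Sketch of proof.}
The plan is first to reduce the quasi-finiteness of $L(h,c)$ to the existence of a single singular vector of $M(h,c)$ in degree $-1$, and then to convert the vanishing conditions for that vector into the stated generating-function identity. The reduction I would prove is: $L(h,c)$ is quasi-finite if and only if there is a nonzero monic polynomial $b(w)$ such that $t^{-1}b(D)\1$ is a singular vector of $M(h,c)$. For the necessity, quasi-finiteness forces $\dim L(h,c)_{-1}<\infty$, so the annihilator $I$ of the image $\bar v$ of $\1$ in $\widehat{\mathcal D}_{-1}$ has finite codimension; since $\bar v$ spans a one-dimensional $\widehat{\mathcal D}_0$-module, $[x,a]\bar v=x(a\bar v)-h(x)(a\bar v)$ vanishes whenever $a\bar v=0$, so $I$ is $\ad\widehat{\mathcal D}_0$-stable. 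Identifying $\widehat{\mathcal D}_{-1}$ with $t^{-1}\C[D]$ and computing $[q(D),t^{-1}p(D)]=t^{-1}\bigl(q(D-1)-q(D)\bigr)p(D)$, one sees the $\ad\widehat{\mathcal D}_0$-submodules of $\widehat{\mathcal D}_{-1}$ to be exactly the $t^{-1}b(D)\C[D]$, so $I=t^{-1}b(D)\C[D]$ for a monic $b\neq0$. Then $t^{-1}b(D)\1$ is singular: $\widehat{\mathcal D}_{\ge 2}$ annihilates $M(h,c)_{-1}$ for degree reasons (the Verma module lives in non-positive degrees), while the non-central part of $[\widehat{\mathcal D}_1,t^{-1}b(D)]$ lies in $\widehat{\mathcal D}_0$ and acts on $\1$ by the same scalars as on $\bar v$, where it vanishes because $t^{-1}b(D)\bar v=0$.

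For the sufficiency, a singular vector $t^{-1}b(D)\1$ generates a proper submodule of $M(h,c)$ whose degree $-1$ component already contains $t^{-1}b(D)\C[D]\1$ (apply $\widehat{\mathcal D}_0$ and use the above commutator), so $L(h,c)_{-1}$ is finite-dimensional. To promote this to full quasi-finiteness I would realize $L(h,c)$ as a quotient of the generalized Verma module induced from the maximal graded subalgebra $\mathfrak p\supseteq\widehat{\mathcal D}_{\ge 0}$ with $\mathfrak p_{-1}=t^{-1}b(D)\C[D]$, and then verify that $\dim(\widehat{\mathcal D}_{-j}/\mathfrak p_{-j})<\infty$ for all $j$: here each $\mathfrak p_{-j}$ is forced to have the form $t^{-j}b_j(D)\C[D]$, the relations $[\widehat{\mathcal D}_1,\mathfrak p_{-j}]\subseteq\mathfrak p_{-(j-1)}$ and $[\mathfrak p_{-i},\mathfrak p_{-j}]\subseteq\mathfrak p_{-(i+j)}$ pin down divisibility relations among the $b_j$, and $\ad(t)\colon\widehat{\mathcal D}_{-j-1}\to\widehat{\mathcal D}_{-j}$ turns out to be surjective with one-dimensional kernel (a short computation). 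That $\widehat{\mathcal D}_-$ is generated by $\widehat{\mathcal D}_{-1}$, needed along the way, follows from Lemma~\ref{lemm11}.

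Granting the reduction, I would work out the singularity conditions explicitly. Since $tD^j\1=0$, we have $tD^j\cdot t^{-1}b(D)\1=[tD^j,t^{-1}b(D)]\1$; using $D^nt^{\pm 1}=t^{\pm 1}(D\pm 1)^n$ one finds that the non-central part of the bracket equals $(D-1)^jb(D)-b(D+1)D^j\in\C[D]$, while the central term $\phi(tD^j,t^{-1}b(D))\,C$ is nonzero only for $j=0$, where it equals $-\tfrac12\,b(1)\,C$. Writing $H\colon\C[D]\to\C$ for the character $D^n\mapsto h_n$, so that $H\bigl(g(D)e^{xD}\bigr)=-g(\partial_x)\Delta_h(x)$, the conditions $tD^j\cdot t^{-1}b(D)\1=0$ for all $j\ge 0$, multiplied by $x^j/j!$ and summed, become
\begin{equation*}
-e^{-x}b(\partial_x)\Delta_h(x)+b(\partial_x+1)\Delta_h(x)=\tfrac{c}{2}\,b(1).
\end{equation*}
By the operator identity $e^{-x}b(\partial_x)f(x)=b(\partial_x+1)\bigl(e^{-x}f(x)\bigr)$, this is equivalent to
\begin{equation*}
b(\partial_x+1)\,\psi(x)=\tfrac{c}{2}\,b(1),\qquad\psi(x):=\bigl(1-e^{-x}\bigr)\Delta_h(x),
\end{equation*}
and here $\psi(0)=0$ automatically.

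Finally, a nonzero monic $b$ solving the last equation exists if and only if $\psi(x)$ is a quasipolynomial: every solution of a constant-coefficient linear ODE with constant right-hand side is a constant plus quasi-exponentials, hence a quasipolynomial; conversely, given a quasipolynomial $\psi$ one builds $b$ from the exponents occurring in $\psi$, raising each corresponding factor $w-\gamma$ to a power exceeding the degree of the polynomial coefficient it carries and, if necessary, including a factor $(w-1)$ of large order to kill the polynomial part (unless that part is already the constant $c/2$). Setting $\phi(x):=e^{x}\psi(x)=\bigl(e^{x}-1\bigr)\Delta_h(x)$, still a quasipolynomial and now with $\phi(0)=0$, gives $\Delta_h(x)=\phi(x)/\bigl(e^{x}-1\bigr)$; conversely any such $\phi$ makes $\psi=e^{-x}\phi$ a quasipolynomial vanishing at $0$. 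Combined with the reduction this is exactly the claim. The main obstacle I anticipate is the sufficiency half of the first reduction, namely proving that the parabolic $\mathfrak p$ has finite codimension in $\widehat{\mathcal D}$ in every negative degree; this is the step genuinely using the structure of $\mathcal D$ rather than formal properties of $\Z$-graded Lie algebras. The generating-function computation is routine once the cocycle term is identified, and the closing ODE analysis is elementary.
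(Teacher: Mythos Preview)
The paper does not supply its own proof of this theorem: it is quoted verbatim from Kac--Radul \cite{KR} (see the sentence ``The following results were given in \cite{KR}'' immediately preceding the statement), and no argument is given in the text. There is therefore nothing in the paper to compare your attempt against.

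That said, your sketch is essentially the original Kac--Radul argument and is sound in outline. The reduction to a degree $-1$ singular vector, the identification of $\ad\widehat{\mathcal D}_0$-stable subspaces of $\widehat{\mathcal D}_{-1}$ with principal ideals $t^{-1}b(D)\C[D]$, the explicit bracket $[tD^j,t^{-1}b(D)]=(D-1)^jb(D)-b(D+1)D^j+\delta_{j,0}\bigl(-\tfrac12 b(1)\bigr)C$, and the passage to the generating function via $H(e^{xD})=-\Delta_h(x)$ are all correct. Two small remarks: (i) Lemma~\ref{lemm11} is about generators of the full algebra $\mathcal D$, not of $\mathcal D_-$; the statement you actually need (that $\mathcal D_-$ is generated by $\mathcal D_{-1}$) is true but requires its own short inductive check. (ii) In the sufficiency direction your outline of the parabolic $\mathfrak p$ is accurate but, as you rightly flag, showing that $\mathfrak p_{-j}=t^{-j}b_j(D)\C[D]$ with each $b_j$ nonzero is the real content; Kac--Radul do this by proving that the maximal parabolic containing $t^{-1}b(D)\C[D]$ has $\mathfrak p_{-j}$ of finite codimension for every $j$, using exactly the $\ad(t)$ surjectivity you mention.
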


\subsection{The subalgebras of  the Lie algebra of differential operators}

The Lie algebra ${\mathcal D}$ has a subalgebra, which is called the Witt algebra,  $\mathcal W=\mbox{Der}\,\bC[t, t^{-1}]$ (also denote by Vect($S^1$), the Lie algebra of all vector fields on the circle). The Virasoro algebra Vir is the universal central extension of $\mathcal W$, with a  basis $\{ L_n=t^{n+1}\frac d{dt}, C| n\in \mathbb{Z}\}$ and  relations
 $$
 [L_m,L_n]=(n-m)L_{m+n}+\delta_{n,-m}\frac{m^{3}-m}{12}C,
 $$
$$
[C,L_m]=0, \,  \forall m, n\in\mathbb{Z}.
$$

The Lie algebra ${\mathcal D}$ has also a subalgebra ${\mathcal D}_1$ generated by $\{I_m:=t^m, L_m:=t^{m}D \mid m\in\z\}$ with the following relations:
\begin{eqnarray*}\label{def1}
&&[L_m, L_n]=(n-m)L_{m+n},\nonumber\\
&&[I_m, I_n]=0,\nonumber\\
&&[L_m, I_n]=nI_{m+n}, \forall m,n\in\mathbb Z.
\end{eqnarray*}
Its universal central extension is called the  twisted
Heisenberg-Virasoro algebra (see \cite{ACKP, B, Liu, LPXZ}).

\subsection{$U(\frak h)$-free modules}

In this subsection, we recall some modules which are free of rank $1$ when regarded as a $\C
[L_0]$-module over the Virasoro algebra.

Let $\mathfrak{L}$ be an associative or Lie algebra and $\mathfrak{g}$ be a subspace of $\mathfrak{L}$. A module $V$ over $\mathfrak{L}$ is called $\mathfrak{g}$-torsion if there exists a nonzero $f\in \mathfrak{g}$ such that $fv=0$ for some nonzero $v\in V $. Otherwise, $V$ is called $\mathfrak{g}$-torsion-free.

For $\lambda \in \C^*, \a\in \C,$ denote by
$\Omega(\lambda,\a)=\C[x]$ the polynomial algebra over $\C$. In
\cite{LZ}, the Virasoro algebra $U({\rm  Vir})$-module structure on $\Omega(\lambda,\a)$ is
given by
$$
L_m f(x)=\lambda^m (x-m\a)f(x-m),\quad\,  Cf(x)=0,\quad \forall m\in \Z, f(x)\in\C[x].
$$
By \cite{LZ}, we know that $\Omega(\lambda,\a)$ is simple if and
only if $\lambda,\a\in \C^*.$ If $\a=0,$ then $\Omega(\lambda,0)$
has a simple submodule $x \Omega(\lambda,0)$ with codimension $1.$

\begin{theo} \cite{TZ} \label{Vir}
Let $M$ be a $U({\rm  Vir})$-module such that the restriction of $U({\rm  Vir})$
to $U(\C L_0)$ is free of rank $1$, that is, $M=U(\C L_0)v$ for some
torsion-free $v\in M$. Then $M\cong \Omega(\lambda,\a)$ for some
$\a\in \C, \lambda\in \C^*$.
\end{theo}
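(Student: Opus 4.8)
The plan is to reconstruct the Virasoro module structure on $M$ by exploiting the fact that $M = U(\C L_0)v = \C[L_0]v$ as a vector space, so every element of $M$ is uniquely a polynomial in $L_0$ applied to $v$. First I would record that, since $[L_0, L_m] = m L_m$, the operator $L_m$ shifts the $L_0$-degree filtration in a controlled way; more precisely, writing the action of each $L_m$ on the generator as $L_m v = g_m(L_0) v$ for a polynomial $g_m \in \C[y]$, the relation $[L_0, L_m] = m L_m$ forces $L_m\bigl(p(L_0)v\bigr) = p(L_0 - m)\, g_m(L_0)\, v$ for all $p$, i.e. the whole action is determined by the family $\{g_m\}$. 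So the problem reduces to solving for the polynomials $g_m$.

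Next I would impose the bracket relations $[L_m, L_n] = (n-m)L_{m+n}$ on the generator $v$. Applying both sides to $v$ and using the shift formula gives the functional equation
\begin{equation*}
g_m(L_0 - n)\,g_n(L_0) - g_n(L_0 - m)\,g_m(L_0) = (n-m)\,g_{m+n}(L_0)
\end{equation*}
as an identity of polynomials in $L_0$. Setting $m = 0$ recovers $g_0$'s role and, combined with $g_0(L_0)v = L_0 v$ forced by $[L_0,L_0]=0$ being vacuous, one checks $g_0(y) = y$. The key structural step is to analyze the equation for small $m,n$: taking $n = -m$, or $n = 1, m = -1$, etc., and comparing degrees. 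Since $v$ is $\C L_0$-torsion-free, no $g_m$ can be the zero polynomial (else $L_m v = 0$ and then $L_{m}$ kills a free generator, and one propagates this using $[L_{m}, L_{-m}]$ and the generation of $\Vir$-relevant elements to contradict freeness, or handle the degenerate case directly). A degree count in the cocycle-free relation shows $\deg g_m$ is bounded independently of $m$, and then that $g_m(y) = \lambda^m(y - m\a)$ for suitable constants: one first extracts $\lambda$ from $g_1$ and $g_{-1}$ (their product relation pins down $\lambda \in \C^*$), then shows the linear-in-$y$ form is forced, and finally reads off $\a$ from the coefficient. Once $g_m(y) = \lambda^m(y - m\a)$ is established for all $m$, matching with the stated action on $\Omega(\lambda,\a)$ — where $L_m$ sends $f(x) \mapsto \lambda^m(x - m\a) f(x-m)$, so that $L_m \cdot 1 = \lambda^m(x - m\a)$ and $L_0 \cdot f = x f$ — gives the isomorphism $M \to \Omega(\lambda,\a)$, $p(L_0)v \mapsto p(x)$, and $C$ acts as $0$ because $Cv$ commutes with everything and the central term only appears in $[L_m,L_{-m}]$, which the polynomial identity already balances with $C \mapsto 0$.

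The main obstacle I anticipate is the degree/uniqueness argument for the $g_m$: a priori $g_m$ could have degree growing with $m$ or the leading coefficients could conspire, so one must use the relation for several choices of $(m,n)$ simultaneously to trap the degrees, and separately rule out pathological low-degree solutions (for instance $g_m$ constant, or $g_m$ supported only on part of $\Z$). Handling the potential vanishing of some $g_m$ cleanly — showing torsion-freeness of $v$ is incompatible with $L_m v = 0$ — is the other delicate point, since one needs to leverage that $\{L_m\}$ together generate enough of the Virasoro action; concretely, from $L_m v = 0$ and the bracket $[L_{-m}, L_m] = 2m L_0$ one would get $2m L_0 v = L_{-m}(0) - L_m(L_{-m}v) = -L_m(g_{-m}(L_0)v) = -g_{-m}(L_0 - m)g_m(L_0)v = 0$, forcing $L_0 v = 0$, i.e. $v$ is $\C L_0$-torsion, a contradiction. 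Once that is in hand the rest is bookkeeping.
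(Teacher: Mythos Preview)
The paper does not contain a proof of this theorem: it is stated with the citation \cite{TZ} and no proof environment follows it; the result is quoted from Tan--Zhao as background for the paper's own classification in Section~3. So there is no ``paper's own proof'' to compare your proposal against.

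That said, your outline is essentially the standard argument (and is, in broad strokes, what \cite{TZ} does): write $L_m v = g_m(L_0)v$, derive the shift formula $L_m p(L_0)v = p(L_0-m)g_m(L_0)v$ from $[L_0,L_m]=mL_m$, and then solve the resulting functional equations for the $g_m$. Two small remarks. First, your displayed functional equation has a sign flip: from $L_mL_n v = g_n(L_0-m)g_m(L_0)v$ one gets
\[
g_n(L_0-m)g_m(L_0) - g_m(L_0-n)g_n(L_0) = (n-m)g_{m+n}(L_0) + \delta_{m+n,0}\tfrac{m^3-m}{12}\,c(L_0),
\]
with the central contribution on the right; once $g_m(y)=\lambda^m(y-m\alpha)$ is established, plugging into the $n=-m$ case forces $c(L_0)=0$, which is the honest reason $C$ acts trivially. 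Second, the part you flag as the ``main obstacle'' --- bounding $\deg g_m$ and pinning down the linear form --- is indeed where the real work lies; your sketch gestures at it but does not carry it out. In \cite{TZ} this is done by first determining $g_1$ and $g_{-1}$ explicitly (degree~$1$, extracting $\lambda$ and $\alpha$), and then using the recursion $(m-1)g_{m+1} = g_m(L_0-1)g_1(L_0)-g_1(L_0-m)g_m(L_0)$ together with its $L_{-1}$ analogue to propagate the formula to all $m$ by induction, rather than a global degree-comparison argument.
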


The module $\Omega(\l,\a)$ can be naturally made into a $\mathcal D_1$-module (or a module over the twisted Heisenberg-Virasoro algebra) as follows.
 For $\lambda \in \C^*,\a,\b \in \C,$ denote by
$\Omega(\lambda,\a,\b)=\C[x]$ the polynomial algebra over $\C$. We
can define the $\mathcal D_1$-module structure on $\Omega(\lambda,\a,\b)$
as follows
$$
L_m f(x)=\lambda^m (x-m\a)f(x-m), \;
I_m f(x)=\b
\lambda^m  f(x-m),
$$
where $f(x) \in \C[x]$, $m\in \Z$. By \cite{CG}, it
is easy to know that $\Omega(\lambda,\a,\b)$ is simple if and only
if $\a\neq0$ or $\b\neq0$. If $\a=\b=0$, then $\Omega(\lambda,0,0)$
has a simple submodule $x\Omega(\lambda,0,0)$ with codimension
$1.$

\begin{theo} \cite{CG} \label{thw}
Let $M$ be a $\mathcal D_1$-module such that the restriction of $U(\mathcal D_1)$
to $U(\C L_0)$ is free of rank $1$, that is, $M=U(\C L_0)v$ for some
torsion-free $v\in M$. Then $M\cong \Omega(\lambda,\a, \beta)$ for some
$\a, \beta\in \C, \lambda\in \C^*$. Moreover, $M$ is simple if and only
if $\a\neq0$ or $\b\neq0$. If $\a=\b=0$, then $\Omega(\lambda,0,0)$
has a simple submodule $x \Omega(\lambda,0,0)$ with codimension
$1.$
\end{theo}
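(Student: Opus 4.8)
The plan is to derive the classification over $\mathcal D_1$ from the one over the Witt algebra that is already quoted. Since the defining relations of $\mathcal D_1$ exhibit $\{L_m\mid m\in\Z\}$ as a copy of the Witt algebra and the hypothesis on $M$ refers only to $L_0$, the restriction of $M$ to this subalgebra is again free of rank one over $U(\C L_0)$; regarding it as a $\Vir$-module with $C$ acting as $0$, Theorem \ref{Vir} yields a Witt-module isomorphism $M\cong\Omega(\lambda,\a)$ for some $\lambda\in\C^*$, $\a\in\C$. A generator of $\Omega(\lambda,\a)$ on which $U(\C L_0)$ acts freely with the whole module as image must be a nonzero constant (as $L_0$ acts there by multiplication by $x$), so after rescaling the isomorphism we may assume $M=\C[x]$, $v=1$, $L_0$ acts as multiplication by $x$, and $L_mf(x)=\lambda^m(x-m\a)f(x-m)$.

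Next I would pin down the action of each $I_m$. From $[L_0,I_n]=nI_n$ one gets $I_nL_0=(L_0-n)I_n$ as operators on $\C[x]$, hence $I_nL_0^k=(L_0-n)^kI_n$; applying this to $1$ and writing $g_n:=I_n\cdot 1\in\C[x]$ gives $I_nf(x)=g_n(x)\,f(x-n)$ for every $f\in\C[x]$ and every $n\in\Z$. Thus the whole module structure is encoded in the family $(g_n)_{n\in\Z}$, and the remaining relations $[L_m,I_n]=nI_{m+n}$ become the polynomial identity
$$\lambda^m\bigl[(x-m\a)\,g_n(x-m)-(x-n-m\a)\,g_n(x)\bigr]=n\,g_{m+n}(x),\qquad m,n\in\Z.$$

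Then I would extract the $g_n$ from well-chosen instances. Taking $n=0$ forces $g_0(x-m)=g_0(x)$ for all $m$, so $g_0$ is a constant, say $\b$ (equivalently $I_0=\b\cdot\id$). Taking $(m,n)=(-1,1)$ and $(m,n)=(1,-1)$ gives $\lambda^{-1}[(x+\a)g_1(x+1)-(x-1+\a)g_1(x)]=\b$ and $\lambda[(x-\a)g_{-1}(x-1)-(x+1-\a)g_{-1}(x)]=-\b$; a leading-coefficient count shows the left-hand side of each has degree exactly $\deg g_{\pm1}$ unless $g_{\pm1}$ is constant, so $g_1,g_{-1}$ are constants, and evaluating gives $g_1=\lambda\b$, $g_{-1}=\lambda^{-1}\b$. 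Finally, specializing to $m=1$ (for $n\ge1$) and to $m=-1$ (for $n\le-1$) turns the identity into a recursion determining $g_{n\pm1}$ from $g_n$, and an immediate induction shows $g_n=\lambda^n\b$ solves it in both directions. Hence $g_n=\lambda^n\b$ for all $n$, i.e. $I_mf(x)=\b\lambda^mf(x-m)$, which together with the formula for $L_m$ is exactly the $\mathcal D_1$-module $\Omega(\lambda,\a,\b)$; so $M\cong\Omega(\lambda,\a,\b)$. The simplicity assertion and the description of the codimension-one submodule when $\a=\b=0$ are then quoted from \cite{CG} and the discussion preceding the theorem.

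The step I expect to require the most care is forcing $g_1$ and $g_{-1}$ to be constant: the symmetric relations $[L_m,I_{-m}]=-mI_0$ and the $m=\pm1$ recursions all degenerate to trivialities at $n=0$, so none of them bounds $\deg g_{\pm1}$; one genuinely needs the asymmetric relations $[L_{\mp1},I_{\pm1}]=\pm I_0$ together with the leading-term comparison to break the symmetry. Everything upstream (the reduction to the Witt case and the normalization, and the identity $I_nf(x)=g_n(x)f(x-n)$) and everything downstream (the two inductions) should be routine.
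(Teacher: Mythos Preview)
The paper does not give its own proof of this theorem: it is stated as a preliminary result quoted from \cite{CG}, with no argument supplied. Consequently there is nothing in the paper to compare your proposal against directly; the paper only \emph{uses} the conclusion (specifically the formulas in \eqref{G0}) as the starting point of the proof of Theorem~\ref{main1}.

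That said, your proposal is a correct and natural self-contained proof. The reduction to Theorem~\ref{Vir} is legitimate (one may view the Witt subalgebra as $\Vir$ with $C$ acting by $0$), and the computation $I_nf(x)=g_n(x)f(x-n)$ from $[L_0,I_n]=nI_n$ is standard. Your key identity
\[
\lambda^m\bigl[(x-m\alpha)\,g_n(x-m)-(x-n-m\alpha)\,g_n(x)\bigr]=n\,g_{m+n}(x)
\]
is correct, and the degree argument at $(m,n)=(\mp1,\pm1)$ works as you describe: writing $g_{\pm1}=cx^d+\cdots$, the coefficient of $x^d$ on the left is $\pm c(d+1)\lambda^{\mp1}$, which forces $d=0$. (Strictly speaking, the left-hand side has degree exactly $\deg g_{\pm1}$ even when $g_{\pm1}$ is a nonzero constant, so the phrase ``unless $g_{\pm1}$ is constant'' is a slight overstatement; the conclusion is simply that this degree must be $0$.) The two one-step recursions then propagate $g_n=\lambda^n\beta$ to all $n$. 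The remaining simplicity statements you correctly defer to \cite{CG}.

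One stylistic remark: the relation $[I_m,I_n]=0$ is never used in your argument, and indeed it is automatically satisfied once all $g_n$ are scalars, so no verification is needed there; you might note this explicitly for completeness.
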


\section{$U(\frak h)$-free modules over the Lie algebra of differential operators}

In this section, we mainly study $U(\frak h)$-free modules over the Lie algebra of differential operators on the circle based on such researches in \cite{CG}.
We classify all $\mathcal D$-modules such that the restriction of $U(\mathcal D)$
to $U(\C D)$ is free of rank $1$ and get some new irreducible $\mathcal D$-modules.

For $\lambda\in \C^*, \epsilon=0, 1$, denote by
$\Omega(\lambda,\epsilon)=\C[x]$ the polynomial algebra over $\C$. We
can define the actions  of $\mathcal D$ on $\Omega(\lambda, \epsilon)$
as follows
\begin{equation}\label{mod-def1}
t^mD^n f(x)=\beta^{1-n}\lambda^m (x-\epsilon m)^nf(x-m),
\end{equation}
where $\beta=(-1)^{1-\epsilon}$, $f(x) \in \C[x]$ and $m\in \Z, n\in\mathbb N$.

\begin{rema}
In the case of $\epsilon=1$, the module $\Omega(\lambda, 1)$ is also a $\mathcal D_{ass}$-module. From \cite{LZ} we see that if $V$ is an irreducible $\mathcal D_{ass}$-module on which $\C[t, t^{-1}]$ is torsion, then $V\cong \Omega(\lambda, 1)$.
\end{rema}

\begin{lemm} \label{lemma31} $\Omega(\lambda, \epsilon)$ is an irreducible $\mathcal D$-module.
\end{lemm}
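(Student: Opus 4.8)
The plan is to show that $\Omega(\lambda,\epsilon)=\C[x]$ has no nonzero proper $\mathcal D$-submodule. Let $V$ be such a submodule and pick a nonzero $f(x)\in V$ of minimal degree, say $\deg f=d$. First I would exploit the action of the elements $t^m$ (the case $n=0$ in \eqref{mod-def1}), which act by $t^m f(x)=\beta\lambda^m f(x-m)$. Since $\lambda\in\C^*$ and $\beta=\pm1$, the submodule $V$ is closed under the shift operators $f(x)\mapsto f(x-m)$ for all $m\in\Z$. Consequently $V$ contains all finite $\C$-linear combinations $\sum_m a_m f(x-m)$; taking suitable differences such as $f(x)-f(x-m)$ lowers the degree unless $d=0$. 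More precisely, the standard finite-difference argument shows that from a single nonzero polynomial $f$ of degree $d$ one generates, using only the shifts, a nonzero polynomial of degree $0$, i.e.\ a nonzero constant, hence $1\in V$. Then applying the shifts again gives every $\lambda^{-m}\cdot 1$, i.e.\ all constants, and to climb back up in degree I would use the operators $t^mD^n$ with $n\ge 1$: acting on the constant $1$ by $t^0D^n$ gives $\beta^{1-n}x^n\in V$ for every $n$, so $V=\C[x]$.

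The one subtlety is that the shift-and-difference step must be done carefully to guarantee the degree genuinely drops. I would argue as follows: if $f$ has degree $d\ge 1$ with leading term $a x^d$, then $f(x)-f(x-1)$ has degree exactly $d-1$ (leading coefficient $ad\ne 0$), and it lies in $V$ because $V$ is a $\C$-subspace stable under $m=1$ shift. Iterating $d$ times produces a nonzero constant in $V$. This is clean and avoids any dependence on $\lambda$ or $\epsilon$ beyond $\lambda\ne 0$. Alternatively, one can phrase it via the operator identity that the $(d)$-th finite difference $\sum_{i=0}^{d}(-1)^i\binom{d}{i}f(x-i)$ equals $d!\,a$, a nonzero scalar; either formulation works.

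With $1\in V$ established, the rest is immediate: \eqref{mod-def1} gives $t^0D^n\cdot 1=\beta^{1-n}x^n$, and since $\beta^{1-n}\in\C^*$ we get $x^n\in V$ for all $n\in\mathbb N$, whence $V\supseteq\spn_\C\{x^n\mid n\in\mathbb N\}=\C[x]$, so $V=\Omega(\lambda,\epsilon)$. Since every nonzero submodule equals the whole module, $\Omega(\lambda,\epsilon)$ is irreducible. I do not expect any real obstacle here; the only thing to be careful about is that the action \eqref{mod-def1} is indeed a well-defined $\mathcal D$-module structure (compatibility with the bracket), but that is presumably verified just before this lemma or is a routine check using the multiplication formula for $\mathcal D_{ass}$, so I would simply cite it. The genuinely load-bearing idea is just the finite-difference descent to a constant followed by the ascent via the $D^n$ action.
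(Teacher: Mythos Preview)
Your irreducibility argument is correct: the finite-difference descent via the shifts $t^m\cdot f(x)=\beta\lambda^m f(x-m)$ (which are invertible since $\lambda\in\C^*$ and $\beta=\pm1$) produces a nonzero constant in any nonzero submodule, and then $D^n\cdot 1=\beta^{1-n}x^n$ rebuilds all of $\C[x]$. This is exactly the content the paper sweeps under the phrase ``It is clear that $\Omega(\lambda,\epsilon)$ is irreducible since $\lambda\ne 0$.''

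The difference from the paper is one of emphasis, not of mathematics. The paper's proof of this lemma is almost entirely devoted to the part you defer: it checks explicitly, by expanding both sides, that the formula \eqref{mod-def1} is compatible with the Lie bracket of $\mathcal D$, i.e.\ that $\Omega(\lambda,\epsilon)$ really is a $\mathcal D$-module. Only after that computation does it assert irreducibility in one line. So your expectation that the module structure ``is presumably verified just before this lemma'' is slightly off: that verification \emph{is} the bulk of the lemma's proof, since the action is introduced only in \eqref{mod-def1} immediately preceding it. In practice your plan is fine---you correctly identify that the bracket check is needed and that it is a direct calculation---but you should not cite it as already done; you would need to carry it out (or at least sketch it) as the paper does. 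Conversely, your explicit irreducibility argument supplies what the paper leaves to the reader.
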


\begin{proof}  For $f(x)\in \mathbb{C}[x], m, m_1\in\Z, n, n_1\in\bN$,
\begin{eqnarray*}
 &&[t^{m}D^{n}, t^{m_{1}}D^{n_{1}}]f(x)=(t^{m}D^{n})(t^{m_1}D^{n_1})f(x)-(t^{m_1}D^{n_1})(t^mD^n)f(x)\\
 &=&\lambda^{m+m_1}\beta^{2-n_1-n}(x-\epsilon m)^{n}(x-m-\epsilon m_1)^{n_1}f(x-m-m_1)\\
&&-\lambda^{m+m_1}\beta^{2-n_1-n}(x-\epsilon m_1)^{n_1}(x-m_1-\epsilon m)^{n}f(x-m-m_1).
\end{eqnarray*}
On the other side,
\begin{eqnarray*}
&& [t^{m}D^{n}, t^{m_{1}}D^{n_{1}}]f(x)
 \\
&=&(\sum_{i=0}^{n}{n\choose i}m_1^{i}t^{m+m_1}D^{n+n_{1}-i}
-\sum_{j=0}^{n_1}{n_{1}\choose j}m^{j}t^{m+m_1}D^{n+n_1-j})f(x)
\\
&=&\sum_{i=0}^{n}{n\choose i}\beta^{1-n-n_1+i}\lambda^{m+m_1}m_1^{i}(x-\epsilon m-\epsilon m_1)^{n+n_{1}-i}f(x-m_1-m)
\\
&&-\sum_{j=0}^{n_1}{n_1\choose j}\beta^{1-n-n_1+j}\lambda^{m+m_1}m^{j}(x-\epsilon m-\epsilon m_1)^{n+n_1-j}f(x-m_1-m)
\\
&=&\beta^{1-n-n_1}\lambda^{m+m_1}(x-\epsilon m+(\beta-\epsilon)m_1)^n(x-\epsilon m-\epsilon m_1)^{n_1}f(x-m_1-m)
\\
&&-\beta^{1-n-n_1}\lambda^{m+m_1}(x-\epsilon m_1+(\beta-\epsilon)m)^{n_1}(x-\epsilon m-\epsilon m_1)^nf(x-m_1-m).
 \end{eqnarray*}
So $\Omega(\lambda, \epsilon)$ becomes  a $\mathcal D$-module since
\begin{eqnarray*}
&&(x-\epsilon m+(\beta-\epsilon)m_1)^n(x-\epsilon m-\epsilon m_1)^{n_1}
\\
&&-(x-\epsilon m_1+(\beta-\epsilon)m)^{n_1}(x-\epsilon m-\epsilon m_1)^n
\\
&=&\beta(x-\epsilon m)^{n}(x-m-\epsilon m_1)^{n_1}-\beta(x-\epsilon m_1)^{n_1}(x-m_1-\epsilon m)^{n},
\end{eqnarray*}
where $\beta=1$ if $\epsilon=1$, and $\beta=-1$ if $\epsilon=0$.

It is clear that $\Omega(\lambda, \epsilon)$ is irreducible since $\lambda\ne0$.
\end{proof}

Now we can give the main result of this section.

\begin{theo} \label{main1}Let $M$ be a $\mathcal D$-module such that the restriction of $U(\mathcal D)$ to $U(\C D)$ is free of rank $1$.
Then $M \cong \Omega(\lambda, \epsilon)$ for some $\lambda\in \C^*$ and $\epsilon\in\{0,1\}$.
\end{theo}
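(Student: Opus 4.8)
The plan is to start from a module $M=U(\C D)v=\C[D]v$ and encode the action of the generators in terms of polynomials, then bootstrap from small operators up to the whole algebra using Lemma \ref{lemm11}. Identify $M$ with $\C[x]$ by sending $D^k v \mapsto x^k$, so that $D$ acts as multiplication by $x$. The first task is to pin down how $t^{\pm 1}$ act: since $[D,t^m]=mt^m$, the operator $t$ raises $D$-weight by one in the sense that $t\cdot\C[x]$ should behave like a shift. Concretely, writing $tv=g(x)$ for some $g\in\C[x]$, one uses the commutation relations to express $t\cdot x^k = t D^k v$ in terms of $g$ and derivatives/shifts; the relation $[D,t]=t$ forces $t$ to act as $x\mapsto g(x)(x-1)^{?}$-type expressions, and comparing degrees shows $g$ must be a nonzero constant times a shift operator. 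I expect to conclude $t^{\pm1}f(x)=\lambda^{\pm1}\mu\, f(x\mp1)$ for scalars $\lambda\in\C^*$ and some normalization, and similarly $t^m f(x)=\lambda^m\mu^{|m|\text{-ish}}f(x-m)$; a careful bookkeeping of the constant is needed, and this is where one sees the parameter that will become $\beta$.

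Next I would determine the action of $D^2$, and more generally $t^mD^n$. The element $D^2$ commutes with $D$, hence with all of $\C[x]=M$ as a $\C[D]$-module? No — $D^2$ is in $U(\C D)$ already, so $D^2 v = x^2$ automatically; the real content is $t^m D^n$ for $m\neq 0$. Here one writes $t^m D^n v$ and uses $t^m D^n = t^m D^{n-1}\cdot D$ together with the known action of $t^m$ and $D$, or better, uses the associative-algebra-like relations in $\mathcal D$ to reduce $t^mD^n f(x)$ to $(\text{action of }t^m)$ composed with a polynomial in $x$. The commutator $[t^mD^n, D] = $ (a multiple of $t^mD^n$ plus lower terms) gives a recursion in $n$ that should force $t^mD^n f(x) = c_{m,n}\lambda^m (x-\epsilon m)^n f(x-m)$ for some constants, where the shift $\epsilon m$ inside the base is determined by the relation $[D, t^mD^n]$ and the constants $c_{m,n}$ are determined by $[t^mD^{n_1}, t^{m'}D^{n_2}]$-type relations (or by the product-like structure). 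Matching these against the two admissible possibilities, $\epsilon\in\{0,1\}$, is exactly the dichotomy in the statement.

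The main obstacle, I expect, is the consistency/rigidity argument that rules out all parameters except $(\lambda,\epsilon)$ with $\epsilon\in\{0,1\}$: a priori the shift $tv=g(x)$ could be an arbitrary polynomial and the $D^2$-action an arbitrary element we have no direct handle on, so one must exploit the generation by $\{t,t^{-1},D^2\}$ (Lemma \ref{lemm11}) to propagate constraints. The key is that $t$ and $t^{-1}$ are mutually inverse-like (since $[t^{-1},t]=0$ and $t^{-1}t = t^0 = 1$ acts as identity up to the action of $D^0=$ identity, modulo the central term which acts trivially on a $U(\C D)$-free module), which forces $g$ to be invertible in a suitable completed sense, hence constant up to the shift; combined with the weight-shift relation $[D,t]=t$ this kills all freedom except $\lambda$ and one discrete choice. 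I would then verify that $D^2$ acts as forced, check the single remaining sign ambiguity gives precisely $\beta=(-1)^{1-\epsilon}$, and invoke Lemma \ref{lemma31} to identify $M$ with $\Omega(\lambda,\epsilon)$; uniqueness of the module structure then follows since the action of the generating set $\{t^{\pm1},D^2\}$ has been completely determined.
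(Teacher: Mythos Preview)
There is a genuine gap in your proposal, and it stems from a single conceptual confusion that recurs in two places. The Lie algebra $\mathcal D$ has basis elements $t^mD^n$; in particular $D^2=t^0D^2$ and $1=t^0D^0$ are \emph{basis elements} of $\mathcal D$, not products in its universal enveloping algebra. Thus $D^2\in\mathcal D$ is \emph{not} the element $D\cdot D\in U(\C D)$, and its action on $v$ is not forced to be $x^2$. Indeed, in $\Omega(\lambda,0)$ one has $D^2\cdot f(x)=-x^2f(x)$, so your assertion ``$D^2v=x^2$ automatically'' is false and the case $\epsilon=0$ would never appear in your argument. The same error resurfaces when you write ``$t^{-1}t=t^0=1$ acts as identity'': the product $t^{-1}\cdot t$ in $U(\mathcal D)$ is not the Lie-algebra element $t^0$, and the latter acts on $\Omega(\lambda,\epsilon)$ as multiplication by $\beta=(-1)^{1-\epsilon}$, not as the identity. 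So your proposed mechanism for forcing $g$ to be constant (``$t$ and $t^{-1}$ mutually inverse-like'') does not work as stated.

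What the paper does instead is to first invoke the classification for the subalgebra $\mathcal D_1$ (Theorem~\ref{thw}, from \cite{CG}), which already gives
\[
t^m\cdot f(x)=\beta\lambda^m f(x-m),\qquad t^mD\cdot f(x)=\lambda^m(x-m\alpha)f(x-m)
\]
for unknown scalars $\alpha,\beta\in\C$ and $\lambda\in\C^*$. The action of the \emph{new} generator $D^2$ is then an unknown multiplication operator $g(x)$; the relation $[D^2,t]=2tD+t$ forces $g(x)=\beta^{-1}x^2+bx+c$. The heart of the proof is a sequence of explicit commutator computations --- $[D^2,t^m]$ gives $\beta=2\alpha-1$ and $b=0$; $[D^3,tD]=3tD^3+3tD^2+tD$ gives $\alpha(\alpha-1)=0$; and $[tD^2,t^{-1}D]=-3D^2+D$ gives $c=0$ --- which together pin down $\alpha\in\{0,1\}$ and $\beta=(-1)^{1-\alpha}$. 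Only then does an induction on $n$ (using $[D^2,t^mD^k]$ for $m\ne0$ and $[t^{-1}D^k,tD^2]$ for $m=0$) propagate the formula to all $t^mD^n$. Your sketch does not contain a substitute for this step, and because of the misidentification of $D^2$ it cannot reach the dichotomy $\epsilon\in\{0,1\}$.
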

\begin{proof}
Let $M=U(\C D)v=\mathbb C[D]v$ be a $U(\mathcal D)$-module such that the restriction of $U(\mathcal D)$ to $U(\C D)$ is free of rank $1$. By Theorem $2$ in \cite{CG},  we have
\begin{equation}\label{G0}
t^mD f(x)=\lambda^m (x-m\a)f(x-m), \quad\,  t^m f(x)=\beta\lambda^m  f(x-m)
\end{equation}
for some $\alpha, \beta\in\C$, where $f(x) \in \C[x]$, $m\in \Z$.

\noindent{\bf Claim 1.}\ \  $\beta\ne0$.

In fact, if $\beta=0$,  by $[D^2, t^m]=2t^mD+t^m$,  we have $t^mDf(x)=0$ for any $m\in\Z$.
In this case, $M$ becomes a trivial $\mathcal D$-module.

\noindent{\bf Claim 2.}\ \  $D^2f(x)=(\frac1\beta x^2+bx+c)f(x)$ for some $b, c\in\C$.

For any $f(x)\in\C[x]$, $D^2f(D)v=f(D)D^2v$. Now we suppose that $D^2v=g(D)v$ for some $g(x)\in\C[x]$, then $D^2 f(x)=g(x)f(x)$ for any $f(x)\in\C[x]$.  By $[D^{2}, t]=2tD+t$, we get
$D^{2}\cdot t\cdot f(x)-t\cdot D^2\cdot f(x)=(2tD+t)\cdot f(x)$.
Then we can obtain
$\beta[g(x)-g(x-1)]=2x+\beta-2\a.$
It is easy to deduce that
$$g(x)=\frac1{\beta}x^2+bx+c$$
for some $b,c\in\C$.

\noindent{\bf Claim 3.}\ \  $b=c=0$, $\alpha\in\{0, 1\}$, and $\beta=(-1)^{1-\alpha}$.

By $[D^{2}, t^{m}]=2mt^{m}D+m^{2}t^{m}$, we have
\begin{eqnarray*}
&&\beta\lambda^m(\frac1{\beta}x^2+bx+c)f(x-m)-\beta[\frac1{\beta}(x-m)^2+b(x-m)+c]f(x-m)
\\
&=&2m\lambda^m(x-\a m)f(x-m)+m^2\beta\lambda^mf(x-m),
\end{eqnarray*}
which forces that $m^{2}+bm=(2\alpha-\beta)m^{2}$ for all $m\in\mathbb{Z}$. So
\begin{equation}\label{G1}
\beta=2\a-1, \quad\, b=0.
\end{equation}
By $[D^{2}, t^{m}D]=m^{2}t^{m}D+2mt^{m}D^{2}$, we can get
\begin{eqnarray*}
& &2mt^{m}D^{2}\cdot f(x)
\\
&=&D^{2}\cdot t^{m}D \cdot f(x)-t^{m}D \cdot D^{2}\cdot f(x)-m^{2}t^{m}D\cdot f(x)
\\
&=&D^{2}\cdot(\lambda^{m}(x-m\alpha)f(x-m))-(t^{m}D)\cdot(g(x)f(x))
-m^{2}\lambda^{m}(x-m\alpha)f(x-m)
\\
&=&\lambda^{m}(x-m\alpha)f(x-m)\big(g(x)-g(x-m)-m^{2}\big)
\\
&=&2m \beta^{-1} \lambda^{m}(x-m\alpha)^{2}f(x-m).
\end{eqnarray*}
Therefore,
\begin{equation}\label{eqd2}
t^{m}D^{2}f(x)=\beta^{-1}\lambda^{m}(x-\a m)^{2}f(x-m), \quad\, m\neq 0.
\end{equation}
Similarly, by $[D^{2}, t^{m}D^2]=2mt^{m}D^{3}+m^{2}t^{m}D^2$ and (\ref{eqd2}), we can obtain
\begin{equation}\label{eqd3}
t^{m}D^{3}f(x)=\beta^{-2}\lambda^{m}(x-\a m)^{3}f(x-m), \quad\, m\neq 0.
\end{equation}
Since $[t^{-1}D^{2}, tD^2]=4D^{3}$, using (\ref{eqd2}), we get
\begin{eqnarray*}
4D^{3}\cdot f(x)&=&t^{-1}D^{2}\cdot tD^{2}\cdot f(x)- tD^{2}\cdot t^{-1}D^{2}\cdot f(x)
 \\
&=&t^{-1}D^{2}\cdot(\beta^{-1}\lambda(x-\alpha)^{2}f(x-1))
-tD^{2}\cdot(\beta^{-1}\lambda^{-1}(x+\alpha)^{2}f(x+1))
\\
&=&\beta^{-2}(x+\alpha)^{2}(x-\alpha+1)^{2}f(x)-\beta^{-2}(x-\alpha)^{2}(x+\alpha-1)^{2}f(x)
\\
&=& 4\beta^{-2}x(x^{2}-\alpha^{2}+\alpha)f(x).
\end{eqnarray*}
Then
\begin{equation}\label{G2}
D^{3}\cdot f(x)=\beta^{-2}x(x^{2}-\alpha^{2}+\alpha)f(x).
\end{equation}
Using (\ref{G0}) and (\ref{G2}), we can get
\begin{eqnarray}
&&D^{3}\cdot tD \cdot f(x)-tD\cdot D^{3} \cdot f(x) \nonumber
\\
&=& \beta^{-2}\lambda(x-\alpha)(3x^{2}-3x+1-\alpha^{2}+\alpha)f(x-1). \label{G3}
\end{eqnarray}
Using (\ref{G0})--(\ref{eqd3}), we have
\begin{eqnarray}\label{k3}
&&(3tD^3+3tD^{2}+tD)\cdot f(x) \nonumber
\\
&=&\lambda (x-\alpha)[3\beta^{-2}(x-\alpha)^{2}+3\beta^{-1}(x-\alpha)+1]f(x-1) \nonumber
\\
&=&\beta^{-2}\lambda (x-\alpha)[3x^{2}-3x+1+\alpha^{2}-\alpha]f(x-1).
\end{eqnarray}
Since $[D^3, tD]=3tD^3+3tD^{2}+tD$, by (\ref{G3}) and (\ref{k3}), we can deduce that
\begin{equation}\label{G4}
\alpha(\alpha-1)=0.
\end{equation}
Combining  (\ref{G1}) and (\ref{G4}), we have $\alpha=0, \beta=-1$ or $\alpha=\beta=1$.
That is,
\begin{equation*}\label{G5}
\alpha\in\{0, 1\}, \quad\, \beta=(-1)^{1-\alpha}.
\end{equation*}
Using (\ref{G0}),  (\ref{eqd2}) and (\ref{G4}), we have
\begin{eqnarray}\label{G6}
& &(tD^{2})\cdot(t^{-1}D)\cdot f(x)-(t^{-1}D)\cdot(tD^{2})\cdot f(x) \nonumber
\\
&=&(tD^{2})(\lambda^{-1}(x+\alpha)f(x+1))-(t^{-1}D)(\beta^{-1}\lambda(x-\alpha)^{2}f(x-1))\nonumber
\\
&=&\beta^{-1}(x-\alpha)^{2}(x+\alpha-1)f(x)-\beta^{-1}(x+\alpha)(x+1-\alpha)^{2}f(x) \nonumber
\\
&=& \beta^{-1}(-3x^{2}+(2\alpha-1)x-\alpha+\alpha^{2})f(x)\nonumber
\\
&=&(-3\beta^{-1}x^{2}+x)f(x),
\end{eqnarray}
and
\begin{equation}\label{G7}
(-3D^2+D)\cdot f(x)=(-3\beta^{-1}x^{2}-3c+x)f(x).
\end{equation}
Since $[tD^2, t^{-1}D]=-3D^2+D$, by (\ref{G6}) and (\ref{G7}),  it is easy to see that
\begin{equation*}\label{G8}
c=0.
\end{equation*}
Furthermore, \eqref{eqd2} holds for all $m\in\Z$, that is,
\begin{equation*}\label{G9}
t^{m}D^{2}f(x)=\beta^{-1}\lambda^{m}(x-\a m)^{2}f(x-m), \quad\, \forall m\in\mathbb{Z}.
\end{equation*}

\noindent{\bf Claim 4.}\ \ For $m\neq 0$, $t^{m}D^{n} \cdot f(x)=\beta^{1-n}\lambda^{m}(x-m\alpha)^{n}f(x-m)$  for all $n\in\mathbb{N}$.

Now we shall prove the result by induction on $n$ in $t^m D^n$ with $m\neq 0$. Suppose that $t^{m}D^{n}f(x)=\beta^{1-n}\lambda^{m}(x-m\alpha)^{n}f(x-m)$ for any $n\leq k$. Then
\begin{eqnarray}\label{G10}
& &D^{2}\cdot (t^{m}D^{k})\cdot f(x)-(t^{m}D^{k})\cdot D^{2}\cdot f(x) \nonumber
\\
&=&D^{2}\cdot(\beta^{1-k}\lambda^{m}(x-m\alpha)^{k}f(x-m))-(t^{m}D^{k})\cdot(\beta^{-1} x^{2}f(x))\nonumber
\\
&=&\beta^{-k}\lambda^{m}x^{2}(x-m\alpha)^{k}f(x-m)-\beta^{-k}\lambda^{m}(x-m\alpha)^{k}(x-m)^{2}f(x-m) \nonumber
\\
&=&\beta^{-k}\lambda^{m}[x^{2}-(x-m)^{2}](x-m\alpha)^{k}f(x-m) \nonumber
\\
&=&\beta^{-k}\lambda^{m}m(2x-m)(x-m\alpha)^{k}f(x-m).
\end{eqnarray}
Since $[D^{2},t^{m}D^{k}]=m^{2}t^{m}D^{k}+2mt^{m}D^{k+1}$,
by (\ref{G10}), we get
\begin{eqnarray*}
& &2mt^{m}D^{k+1}\cdot f(x)
\\
&=&\beta^{-k}\lambda^{m}m(2x-m)(x-m\alpha)^{k}f(x-m)-m^{2}(t^{m}D^{k})\cdot f(x)
\\
&=&\beta^{-k}\lambda^{m}m(2x-m)(x-m\alpha)^{k}f(x-m)-m^{2}\beta^{1-k}\lambda^{m}(x-m\alpha)^{k}f(x-m)
\\
&=&\beta^{-k}\lambda^{m}m(2x-2m\alpha)(x-m\alpha)^{k}f(x-m)
\\
&=&2m\beta^{1-(k+1)}\lambda^{m}(x-m\alpha)^{k+1}f(x-m).
\end{eqnarray*}
Therefore, $t^{m}D^{k+1}\cdot f(x)=\beta^{1-(k+1)}\lambda^{m}(x-m\alpha)^{k+1}f(x-m)$ for $m\neq 0$.

\noindent{\bf Claim 5.}\ \ $D^{n}\cdot f(x)=\beta^{1-n}x^{n}f(x)$  for all $n\in\mathbb{N}$.

We shall prove the result by induction on $n$ in $ D^n$. Suppose that $D^{n}f(x)=\beta^{1-n}x^{n}f(x)$ for any $n\leq k$.  By
$
[t^{-1}D^{k},tD^{2}]=(k+2)D^{k+1}+\frac12(k+1)(k-2)D^{k}+\sum_{i=3}^{k}{k\choose i}D^{k+2-i},
$
we get
\begin{eqnarray}\label{G11}
&&(k+2)D^{k+1}\cdot f(x) \nonumber
\\
&=&(t^{-1}D^{k})(tD^{2})\cdot f(x)-(tD^{2})(t^{-1}D^{k})\cdot f(x) \nonumber
\\
&&-\frac12(k+1)(k-2)\beta^{1-k}x^{k}f(x)-\sum_{i=3}^{k}{k\choose i}\beta^{i-k-1}x^{k+2-i} f(x)\nonumber
\\
&=&(t^{-1}D^{k})(\beta^{-1}\lambda(x-\alpha)^{2} f(x-1))-(tD^{2})(\beta^{1-k}\lambda^{-1}(x+\alpha)^{k}f(x+1)) \nonumber
\\
&&-\frac12(k+1)(k-2)\beta^{1-k}x^{k}f(x)-\sum_{i=3}^{k}{k\choose i}\beta^{i-k-1}x^{k+2-i} f(x) \nonumber
\\
&=&\beta^{-k}[(x+\alpha)^{k}(x+1-\alpha)^{2}-(x-\alpha)^{2}(x-1+\alpha)^{k}]f(x) \nonumber
\\
& &-\frac12(k+1)(k-2)\beta^{1-k}x^{k}f(x)-\sum_{i=3}^{k}{k\choose i}\beta^{i-k-1}x^{k+2-i} f(x).
\end{eqnarray}
If $\alpha=0$ in (\ref{G11}),  we have
\begin{eqnarray*}
&&(k+2)D^{k+1}\cdot f(x)
\\
&=&\beta^{-k}\left(x^{k}(x+1)^{2}-x^{2}(x-1)^{k}-\frac12(k+1)(k-2)\beta x^{k}
-\sum_{i=3}^{k}{k\choose i}\beta^{i-1}x^{k+2-i} \right) f(x)
\\
&=&\beta^{-k}[x^{k+2}+2x^{k+1}+x^{k}-\sum_{i=0}^{k}{k\choose i}\beta^{i}x^{k+2-i}+\frac12(k+1)(k-2)x^{k}
\\
& &
-\sum_{i=3}^{k}{k\choose i}\beta^{i-1}x^{k+2-i} ] f(x)
\\
&=&\beta^{-k}\left((k+2)x^{k+1}-\sum_{i=3}^{k}{k\choose i}\beta^{i}x^{k+2-i}
-\sum_{i=3}^{k}{k\choose i}\beta^{i-1}x^{k+2-i} \right) f(x)
\\
&=&\beta^{1-(k+1)}(k+2)x^{k+1}f(x).
\end{eqnarray*}
If $\alpha=1$ in (\ref{G11}), we have
\begin{eqnarray*}
&&(k+2)D^{k+1}\cdot f(x)
\\
&=&\beta^{-k}\left(x^{2}(x+1)^{k}-x^{k}(x-1)^{2}-\frac12(k+1)(k-2)x^{k}
-\sum_{i=3}^{k}{k\choose i}x^{k+2-i}\right) f(x)
\\
&=&\beta^{-k}\left(\sum_{i=3}^{k}{k\choose i}x^{k+2-i} +(k+2)x^{k+1}-\sum_{i=3}^{k}{k\choose i}x^{k+2-i}\right) f(x)\\
&=&\beta^{1-(k+1)}(k+2)x^{k+1}f(x).
\end{eqnarray*}
Therefore,
$
t^{m}D^{k+1}\cdot f(x)=\beta^{1-(k+1)}\lambda^{m}(x-m\alpha)^{k+1}f(x-m).
$
Hence the claim holds.

By Claim $4$ and Claim $5$, we obtain
$$t^{m}D^{n} \cdot f(x)=\beta^{1-n}\lambda^{m}(x-m\alpha)^{n}f(x-m), \quad\, \forall m\in\mathbb{Z},n\in\mathbb{N}.$$
The theorem holds.
\end{proof}

\begin{coro}\label{c1}
Let $M$ be a $\widehat{\mathcal D}$-module such that the restriction of $U(\widehat{\mathcal D})$ to $U(\C D)$ is free of rank $1$.
Then $M \cong \Omega(\lambda, \epsilon)$ for some $\lambda\in \C^*$ and $\epsilon=0,1$, where $C$ acts trivially on $M$.
\end{coro}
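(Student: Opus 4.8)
The plan is to deduce the corollary from Theorem \ref{main1} together with one bracket computation that kills the central element. First I would view $M$ as a module over the subalgebra $\mathcal D\subseteq\widehat{\mathcal D}$ by restriction. Since $D\in\mathcal D$ and the underlying $\C[D]$-module structure is untouched by this restriction, $M$ is a $\mathcal D$-module whose restriction to $U(\C D)$ is still free of rank $1$. Hence Theorem \ref{main1} applies and yields a $\mathcal D$-module isomorphism $M\cong\Omega(\lambda,\epsilon)$ for some $\lambda\in\C^*$, $\epsilon\in\{0,1\}$. Identify $M$ with $\C[x]$ accordingly; then each $t^mD^n$ acts by formula \eqref{mod-def1}, and in particular $D=t^0D^1$ acts as multiplication by $x$, so the free generator may be taken to be the constant $1$.

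It remains to show that $C$ annihilates $M$. Because $C$ is central in $\widehat{\mathcal D}$ it commutes with the action of $D$, and since $M=\C[D]\cdot 1$ is $\C[D]$-free of rank $1$ with $D$ acting as multiplication by $x$, the operator giving the action of $C$ must be multiplication by a fixed polynomial $g(x)\in\C[x]$ (namely $g=C\cdot 1$). To pin down $g$, I would pick a pair of elements of $\mathcal D$ whose cocycle value is nonzero, for instance $t^2D$ and $t^{-2}D$: in $\mathcal D$ one has $[t^2D,t^{-2}D]=-4D$, while $\phi(t^2D,t^{-2}D)=\tfrac12$, so in $\widehat{\mathcal D}$ we get $[t^2D,t^{-2}D]=-4D+\tfrac12 C$. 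Evaluating both sides as operators on $\C[x]$: the left-hand side is $(t^2D)(t^{-2}D)-(t^{-2}D)(t^2D)$, which is computed directly from \eqref{mod-def1} (equivalently, it must act as the $\mathcal D$-bracket does, since $\Omega(\lambda,\epsilon)$ is a genuine $\mathcal D$-module) and equals multiplication by $-4x$; the right-hand side is multiplication by $-4x+\tfrac12 g(x)$. Comparing forces $g(x)=0$, i.e. $C\cdot 1=0$, and then $C\cdot f(D)\cdot 1=f(D)\cdot C\cdot 1=0$ for all $f$, so $C$ acts as $0$ on $M$. (Alternatively, since $\Omega(\lambda,\epsilon)$ is irreducible by Lemma \ref{lemma31}, a Schur/Dixmier argument shows $C$ acts by a scalar, which the same identity forces to be $0$.)

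Finally, with $C$ acting trivially on both $M$ and $\Omega(\lambda,\epsilon)$, the $\mathcal D$-module isomorphism from Theorem \ref{main1} is automatically an isomorphism of $\widehat{\mathcal D}$-modules, which proves the corollary. I do not expect a genuine obstacle here: the substantive classification work is entirely carried by Theorem \ref{main1}, and the only thing requiring care is the bookkeeping around the central term — verifying that the chosen pair has nonzero $\phi$-value and that the two evaluations of the commutator match up.
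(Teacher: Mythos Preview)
Your argument is correct. The overall strategy---apply Theorem \ref{main1} to identify the $\mathcal D$-module, then show separately that $C$ acts by zero---matches the paper's, but the paper handles the central element differently: it simply invokes Theorem \ref{Vir} (the Virasoro classification from \cite{TZ}). Since $\mathrm{Vir}$ sits inside $\widehat{\mathcal D}$ with $L_0=D$ and with the same central element $C$ (the cocycles agree on the $t^mD$'s), restricting $M$ to $\mathrm{Vir}$ gives a rank-one $U(\C L_0)$-free module, and Theorem \ref{Vir} forces $C$ to act as $0$ already there. Your direct bracket computation with $[t^2D,t^{-2}D]=-4D+\tfrac12 C$ is an equally valid, and more self-contained, way to reach the same conclusion; the paper's route is just shorter because the triviality of $C$ has already been baked into the cited Virasoro result.
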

\begin{proof}
It follows from Theorem \ref{main1} and Theorem \ref{Vir}.
\end{proof}

\section{Tensor Products }

 In this section, we will obtain a class of irreducible non-weight modules over $\mathcal{D}$ by taking the tensor products of $\Omega(\lambda, \epsilon)$ with  quasifinite highest weight modules and determine the necessary and sufficient conditions for two irreducible tensor products to be isomorphic.

\begin{theo}\label{51}  Let $\lambda \in \mathbb{C}^{\ast}$ and $\epsilon \in \{0,1\}$. Let $V$ be an irreducible quasifinite highest weight module over $\mathcal{D}$. Then $\Omega(\lambda, \epsilon)\otimes V$  is an irreducible $\mathcal D$-module.
\end{theo}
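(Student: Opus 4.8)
The plan is to use the standard strategy for proving irreducibility of tensor products of the form $\Omega \otimes V$ where $V$ is a highest weight module: exploit the fact that $\Omega(\lambda,\epsilon)$ is a ``generic'' module on which the abelian-ish part $\C[t,t^{-1}]$ acts by shift operators, together with the highest weight structure of $V$, to show that any nonzero submodule must contain $\1\otimes v_{h,c}$ (a vector built from the highest weight vector), and then that it is all of $\Omega\otimes V$. Concretely, fix a nonzero submodule $W\subseteq \Omega(\lambda,\epsilon)\otimes V$. Pick $0\neq w\in W$ and write it as $w=\sum_{i} f_i(x)\otimes v_i$ with $\{v_i\}$ linearly independent in $V$; among all nonzero elements of $W$, choose one for which the total polynomial degree $\sum_i \deg f_i$ (or, better, the number of summands, then the degrees) is minimal.

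The first key step is a ``degree reduction'' argument using the operators $t^m$ (the $n=0$ case of \eqref{mod-def1}): by \eqref{mod-def1}, $t^m(f(x)\otimes v)=\beta\lambda^m f(x-m)\otimes v + f(x)\otimes t^m v$ type terms — more precisely one must expand $t^m$ acting on the tensor product via the coproduct, so $t^m$ contributes $\beta\lambda^m f(x-m)$ on the $\Omega$ factor while $t^m$ also acts on the $V$ factor. Taking suitable $\C$-linear combinations of $t^m w$ for several values of $m$, and using that $\{\lambda^m\}$ separates shifts, I would kill the dependence on $x$ and arrange that $W$ contains a nonzero vector of the form $1\otimes v$ with $v\in V$. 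Since $V$ is irreducible, $U(\mathcal D)(1\otimes v)$ projects onto all of $V$ in the second factor; combined with the first factor this is the step where one shows $W\supseteq 1\otimes V$. Then, acting by $t^m D^n$ and extracting the polynomial part via \eqref{mod-def1} — here one uses that the operators $f(x)\mapsto (x-\epsilon m)^n f(x-m)$, as $m,n$ vary, generate enough of $\End(\C[x])$ — one produces $f(x)\otimes v$ for every $f$, hence $W=\Omega(\lambda,\epsilon)\otimes V$.

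The main obstacle, and the place requiring genuine care, is the interplay between the shift action on $\Omega$ and the highest weight grading on $V$ in the degree-reduction step: because $V$ is only quasi-finite (each graded piece finite-dimensional but $V$ infinite-dimensional), one cannot simply use finite-dimensionality of $V$, and one must instead use the grading — apply $t^m$ with $m\gg 0$ or $m\ll 0$ to exploit that $\widehat{\mathcal D}_+$ annihilates the highest weight vector, so that on the ``top'' graded component the $V$-action of $t^m$ for $m>0$ is forced to behave in a controlled (eventually vanishing) way. This lets one peel off the $V$-contribution and isolate the pure $\Omega$-shift, which is invertible, completing the reduction. A secondary technical point is checking that the polynomials $(x-\epsilon m)^n$ appearing in \eqref{mod-def1} really do let one realize arbitrary multiplication operators on $\C[x]$ after composing with shifts and taking linear combinations; this is elementary (Vandermonde-type) but should be stated. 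Once $W$ contains $1\otimes v$ for one $v$ and then all $f\otimes v$, irreducibility of $V$ finishes the argument.
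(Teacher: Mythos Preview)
Your proposal is essentially correct and follows the same strategy as the paper: pick a nonzero element of a submodule, use operators $t^mD^n$ with $m$ large enough that they annihilate the finitely many $V$-components involved (this is exactly where quasi-finite highest weight is used), and then separate powers of $m$ by a Vandermonde argument to reduce to an element of the form $1\otimes v$. The paper happens to use $t^mD$ rather than $t^m$ for this reduction, and organizes the element as $\sum_j x^j\otimes v_j$ rather than $\sum_i f_i\otimes v_i$ with the $v_i$ independent, but these are cosmetic differences.

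There is, however, one genuine organizational gap in your last two steps. You write ``since $V$ is irreducible, $U(\mathcal D)(1\otimes v)$ projects onto all of $V$ in the second factor; \ldots\ one shows $W\supseteq 1\otimes V$'', and only afterwards generate the $\Omega$-factor. This does not work as stated: for $g\in\mathcal D$ one has $g(1\otimes v)=(g\cdot 1)\otimes v + 1\otimes(g\cdot v)$, and you cannot isolate $1\otimes(g\cdot v)$ inside the submodule unless you already know that $(g\cdot 1)\otimes v$ lies in it, i.e.\ unless $\Omega(\lambda,\epsilon)\otimes v$ is already there. The paper fixes this by reversing the order: first, from $1\otimes v$ one obtains $\Omega(\lambda,\epsilon)\otimes v\subseteq M$ by applying $t^mD$ with $m\geq K(v)$ and inducting on the degree in $x$; second, one sets $X=\{u\in V:\Omega(\lambda,\epsilon)\otimes u\subseteq M\}$, checks from the tensor Leibniz rule that $X$ is a $\mathcal D$-submodule of $V$, and concludes $X=V$ by irreducibility. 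Swap your two final steps accordingly and the argument goes through.
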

\begin{proof} Let $W=\Omega(\lambda, \epsilon)\otimes V$. It is clear that, for any $v\in V$, there is a positive integer $K(v)$ such that $t^{m}D^{n}(v)=0$ for all $m\geq K(v)$.

Suppose $M$ is a nonzero submodule of $W$. It suffices to show that $W=M$. Take a nonzero $w=\sum_{j=0}^{s}x^{j}\otimes v_{j} \in W$ such that $v_{j}\in V$, $v_s\neq 0$ and $s $ is minimal.

\noindent{ {\bf Claim 1.} $s=0$.}

Let $K={\rm max}\,\{K(v_{j}):j=0,1,...,s\}$.  Using $t^{m}D(v_{j})=0$ for all $m\geq K$ and $j=0,1,...,s$, we can deduce that
$$
\lambda^{-m}t^{m}D(w)=\sum_{j=0}^{s}(x-\epsilon m)(x-m)^{j}\otimes v_{j} \in M,
 \, \forall m\geq K.
 $$
Write the right hand side as
\begin{equation*}
\sum_{j=0}^{s+1}m^{j}w_j\in M,  \quad\,    \forall m \geq K,
\end{equation*}
where $w_{j}\in W$ are independent of $m$. In particular, $w_{s+1}=\epsilon (-1)^{s-1}\otimes v_{s}$.  Taking $m=K, K+1, ..., K+s$, we see that the coefficient matrix of $w_{i}$ is a  Vandermonde matrix. So each $w_i\in M$. In particular, $w_{s+1}=\epsilon (-1)^{s-1}\otimes v_{s}\in M$. Consequently,  $s=0$.

\noindent{{\bf Claim 2.} $W=M$.}

By Claim 1, we know that $1\otimes v\in M$ for some nonzero $v\in V$. By induction on $i$ and using
\begin{eqnarray*}
 t^{m}D(x^{i}\otimes v)&=&(\lambda^{m}(x-\epsilon m)(x-m)^{i})\otimes v\\
&=&(\lambda^{m}(x-m)^{i+1})\otimes v+(\lambda^{m}m(1-\epsilon)(x-m)^{i})\otimes v,
\end{eqnarray*}
where $m \geq K(v)$, $i\in \mathbb{N}$, we deduce that $x^{i}\otimes v \in M$ for all $i\in\mathbb{N}$, i.e., $\Omega(\lambda,\epsilon)\otimes v\subset M$. Let $X$ be a maximal subspace of $V$ such that $\Omega(\lambda,\epsilon)\otimes X\subset M$. We know that $X\neq 0$. Clearly, $X$ is a nonzero submodule of $V$. Since $V$ is irreducible,  we obtain that $X=V$. Therefore, $M=W$ and $M$ is irreducible.
\end{proof}

Next, we will determine the necessary and sufficient conditions for two irreducible tensor products defined in Theorem \ref{51} to be isomorphic.

\begin{theo}\label{52}  Let $\lambda,\lambda' \in \mathbb{C}^{\ast}$ and $\epsilon,\epsilon' \in \{0,1\}$, and let $V$ and $V'$  be two irreducible  quasifinite highest weight modules over $\mathcal{D}$. Then $W=\Omega(\lambda, \epsilon)\otimes V$ and  $W'=\Omega(\lambda', \epsilon')\otimes V'$   are isomorphic as  $\mathcal D$-modules if and only if $(\lambda,\epsilon)=(\lambda' , \epsilon')$ and $V\cong V'$ as $\mathcal D$-modules.
\end{theo}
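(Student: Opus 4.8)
The plan is to prove the harder ``only if'' direction, since the ``if'' direction is immediate: an isomorphism $\Omega(\lambda,\epsilon)\cong\Omega(\lambda',\epsilon')$ together with an isomorphism $V\cong V'$ tensor together to give $W\cong W'$. So assume $\varphi\colon W\to W'$ is a $\mathcal D$-module isomorphism. First I would fix a highest weight vector $v_0\in V$ and $v_0'\in V'$, with $\mathcal D_+v_0=0$, $\mathcal D_+v_0'=0$, $D^kv_0=h_kv_0$, $D^kv_0'=h_k'v_0'$. The key structural observation is that the operators $t^mD^n$ with $m$ large are locally nilpotent on $V$ (since $V$ is quasifinite highest weight), hence also on $W$ modulo the ``large'' part, so the action of $t^m$ for $m\gg0$ on $W$ essentially mimics its action on $\Omega(\lambda,\epsilon)$. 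Concretely I would look at how a highest-weight-like element $1\otimes v_0$ behaves and compare eigenvalue data.

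The main technical step is to pin down $\lambda$ and $\epsilon$. Consider the operator $t^{-m}t^{m}$ (a composition in $U(\mathcal D)$, not a Lie bracket) — but more cleanly, I would use that on $\Omega(\lambda,\epsilon)\otimes V$ the element $t^m$ acts, for $m\ge K(v)$ applied to $x^i\otimes v$, as $\lambda^m(x-m)^i\otimes v$ plus lower-degree corrections (using that $\mathcal D_+$ kills $v_0$), and similarly on $W'$. Applying $t^m$ and $t^{-m}$ for large $|m|$ and tracking the leading $\lambda$-power forces $\lambda=\lambda'$; tracking the subleading coefficient that distinguishes $(x-\epsilon m)(x-m)^i$ from $(x-m)^{i+1}$ (i.e.\ the $m(1-\epsilon)$ term appearing in the proof of Theorem~\ref{51}) forces $\epsilon=\epsilon'$. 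Alternatively, and perhaps more robustly, one compares the action of $D$ itself: on $W$, $D$ acts as $\beta^{-1}x^2\otimes\mathrm{id}+\cdots$ coming from $\Omega$ tensored with the $D$-action on $V$, whereas the ``polynomial degree in $x$'' is an invariant detectable via the Vandermonde/filtration argument already used above. The filtration of $W$ by $\bigoplus_{j\le s}x^j\otimes V$ is preserved in a suitable asymptotic sense, and $\varphi$ must respect the induced grading, yielding a $\mathbb C[x]$-linear identification on associated graded pieces; comparing the $t^m$-action there gives $\lambda=\lambda'$ and $\epsilon=\epsilon'$.

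Once $(\lambda,\epsilon)=(\lambda',\epsilon')$, I would show $V\cong V'$. Set $\Omega=\Omega(\lambda,\epsilon)=\Omega(\lambda',\epsilon')$. The idea is that $\varphi$, being compatible with the $\mathbb C[x]$-structure on associated graded pieces (equivalently, with the action of $t^m$ for $m\gg0$, which generates enough of $\mathbb C[x]$ asymptotically after dividing by $\lambda^m$), descends to an isomorphism of $V$ and $V'$. More precisely, consider the subspace $W_{\mathrm{top}}$ of elements $1\otimes v$; the action of $D$ on $W$ restricted to constants-in-$x$ modulo higher degree reads off the $D$-action on $V$, and the highest weight vectors of $V$ and $V'$ inside $W$ and $W'$ are characterized intrinsically (as generators of the unique minimal such piece annihilated by $\mathcal D_+$ up to the $\Omega$-factor), so $\varphi$ carries $1\otimes v_0$ to a scalar multiple of $1\otimes v_0'$, forcing $h_k=h_k'$ for all $k$ and $c=c'$; hence $V\cong L(h,c)\cong L(h',c')\cong V'$.

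The main obstacle I expect is making precise the sense in which $\varphi$ ``preserves the $x$-filtration'' on $W$ and $W'$: $\varphi$ need not send $x^j\otimes V$ into $x^j\otimes V'$ on the nose, so one must extract an invariant filtration from the module structure itself — the natural candidate being defined via local nilpotency degrees of the operators $t^mD^n$ for large $m$, or via the rate of growth in $m$ of $\lambda^{-m}t^m$ acting on a given vector. Establishing that this filtration is canonical and that $\varphi$ respects it, so that one may pass to associated graded modules and reduce to a clean comparison of $\mathbb C[x]\otimes V$ with $\mathbb C[x]\otimes V'$ as modules over the large-$m$ part of $\mathcal D$, is the crux; everything after that is bookkeeping with Vandermonde determinants as in Theorem~\ref{51}.
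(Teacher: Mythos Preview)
Your plan has the right ingredients but stops short of the key step, and you yourself flag the gap: you never establish that $\varphi$ respects any canonical $x$-filtration, and your alternative route to $V\cong V'$ via matching highest weight vectors presupposes exactly that (note $1\otimes v_0$ is \emph{not} an eigenvector for $D^k$ in $W$, since $D^k(1\otimes v_0)=\beta^{1-k}x^k\otimes v_0+h_k(1\otimes v_0)$, so reading off $h_k=h_k'$ already requires knowing $\varphi$ preserves the degree-$0$ piece). The paper bypasses the filtration machinery entirely by an eigenvalue argument that you almost wrote down but did not execute. Take any nonzero $v\in V$, write $\varphi(1\otimes v)=\sum_{j=0}^{k}x^{j}\otimes w_{j}$, and observe that for $m,m'\ge K$ large enough that $t^mD$ and $t^{m'}D$ annihilate $v$ and all $w_j$, one has $(\lambda^{-m}t^{m}D-\lambda^{-m'}t^{m'}D)(1\otimes v)=\epsilon(m'-m)(1\otimes v)$, so $1\otimes v$ is an eigenvector of this operator. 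Applying $\varphi$ and computing the same operator on $\sum x^j\otimes w_j$ in $W'$ forces $\lambda=\lambda'$ (from the $x^{k+1}\otimes w_k$ term), then $k=0$ (case split on $\epsilon'$), and finally $\epsilon=\epsilon'$. Thus $\varphi(1\otimes v)=1\otimes\tau(v)$ for every $v$, giving a bijection $\tau\colon V\to V'$ directly --- no filtration needed.

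Once that is in hand, the paper does not compare highest weights at all: from $\varphi(t^mD^n(1\otimes v))=t^mD^n(1\otimes\tau(v))$ one first extracts $\varphi((x-\epsilon m)^n\otimes v)=(x-\epsilon m)^n\otimes\tau(v)$ for $m\ge K$, hence $\varphi(g(x)\otimes v)=g(x)\otimes\tau(v)$ for all polynomials $g$, and feeding this back yields $\tau(t^mD^n v)=t^mD^n\tau(v)$ for all $m,n$. So $\tau$ is a $\mathcal D$-module isomorphism. Your outline was circling this argument; what was missing was the realization that the single scalar eigenvalue equation above already forces $\varphi(1\otimes V)\subseteq 1\otimes V'$, which is the whole crux.
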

\begin{proof}  The sufficiency of the theorem is obvious. We only need to prove the necessity.
Let $\varphi$ be an isomorphism from $W$ to $W'$ as  $\mathcal D$-modules.
Take a nonzero element $1\otimes v \in W$. Suppose
$$
\varphi(1\otimes v)=\sum_{j=0}^{k}x^{j}\otimes w_{j},
$$
where $w_{j}\in V'$ with $w_{k}\neq 0$. There is a positive integer $K=K(v)$ such that $t^{m}D \cdot v=t^{m}D\cdot w_{j}=0$ for all $m\geq K$ and $0\leq j \leq k$. For any $m,m'\geq K$, we have
$$
(\lambda^{-m}t^{m}D-\lambda^{-m'}t^{m'}D)(1\otimes v)=\epsilon(m'-m)(1\otimes v),
$$
which suggests
\begin{eqnarray}\label{G12}
& &\epsilon (m'-m)(\sum_{j=0}^{k}x^{j}\otimes w_{j})= (\lambda^{-m}t^{m}D-\lambda^{-m'}t^{m'}D)(\sum_{j=0}^{k}x^{j}\otimes w_{j})\nonumber
\\
&=&\sum_{j=0}^{k}\left[(\frac{\lambda'}{\lambda})^{m}(x-\epsilon' m)(x-m)^{j}-
(\frac{\lambda'}{\lambda})^{m'}(x-\epsilon' m')(x-m')^{j}\right]\otimes w_{j}.
\end{eqnarray}
Then we  can get that
$
\left[(\frac{\lambda'}{\lambda})^{m}-(\frac{\lambda'}{\lambda})^{m'}\right](x^{k+1}\otimes w_{k})=0
$ for all $m,m' \geq K.$
So
\begin{equation*}\label{G13}
\lambda=\lambda'
\end{equation*}
and (\ref{G12}) becomes
\begin{equation}\label{G13+1}
\epsilon( m'-m)(\sum_{j=0}^{k}x^{j}\otimes w_{j})
=\sum_{j=0}^{k}((x-\epsilon' m)(x-m)^{j}-(x-\epsilon' m')(x-m')^{j})\otimes w_{j},
\end{equation}
where $m,m'\geq K$. If $k> 0$, for $\epsilon'=1$, by the  coefficient of $m^{k+1}$ in (\ref{G13+1}), we get
$(-1)^{k+1}\epsilon' (1\otimes w_{k})=0$, a contradiction. If $k> 0$, for $\epsilon' =0$,  by the leading coefficient of $m$ in (\ref{G13+1}), it is easy to see that $k=1$ and $\epsilon( m'-m)(1\otimes w_{0}+x\otimes w_{1})
=(m'-m)x\otimes w_{1}$ for all $m,m'\geq K$, which is impossible.  Therefore,
$k=0$ and
$\epsilon( m'-m)(1\otimes w_{k})=\epsilon'( m'-m)(1\otimes w_{k})$ for all $m,m'\geq K$,
which forces
\begin{equation*}\label{G14}
\epsilon=\epsilon'.
\end{equation*}
Thus there is a bijection  $\tau: V \longrightarrow V'$ such that
\begin{equation*}\label{54}
\varphi(1\otimes v)=1\otimes \tau(v),  \quad\,    \forall  v \in V.
\end{equation*}

Since
$
\varphi(t^{m}D^{n}\cdot(1\otimes v))=t^{m}D^{n}\cdot(\varphi(1\otimes v))=t^{m}D^{n}\cdot(1\otimes \tau(v)),
$
we have
\begin{eqnarray*}\label{G15}
& &\beta^{1-n}\lambda^{m}\varphi((x-\varphi m)^{n}\otimes v)+ \varphi(1\otimes(t^{m}D^{n}\cdot v)) \nonumber
\\
&=& \beta^{1-n}\lambda^{m}(x-\varphi m)^{n}\otimes \tau(v)+1\otimes(t^{m}D^{n}\cdot \tau(v)), \quad\,m\in \mathbb{Z}.
\end{eqnarray*}
Hence
$
\beta^{1-n}\lambda^{m}\varphi((x-\epsilon m)^{n}\otimes v)=\beta^{1-n}\lambda^{m}(x-\epsilon m)^{n}\otimes\tau(v)
$ for $m\geq K.$ So
$$
\varphi((x-\epsilon m)^{n}\otimes v)=(x-\epsilon m)^{n}\otimes\tau(v), \quad\,m\geq K.
$$
Then it is easy to deduce that
$$\varphi(t^{m}D^{n}(1)\otimes v)=t^{m}D^{n}(1)\otimes\tau (v), \quad\,m\in \mathbb{Z}.$$
By
$
\varphi(t^{m}D^{n}\cdot(1\otimes v))=t^{m}D^{n}\cdot(\varphi(1\otimes v))
$,
we can obtain
$$\varphi(1\otimes t^{m}D^{n} (v))=1\otimes t^{m}D^{n}(\tau( v)).$$
So
$$
  \tau (t^{m}D^{n}( v))=t^{m}D^{n}(\tau( v)), \,  \, \forall m \in \mathbb{Z}, \, v \in V.
$$
 Clearly, for $c \in \mathbb{C}$, $\varphi(c(1\otimes v))=c(\varphi(1\otimes v))$ implies that $\tau(c v)=c\tau(v)$. Thus $\tau:V\longrightarrow V'$  is a $\mathcal{D}$-module isomorphism and $V\cong V'$. The proof is finished.
\end{proof}

\section{Differential operator algebra on $\bC[t_1^{\pm1}, \ldots, t_\nu^{\pm1}]$}

For any positive integer $\nu$, let $\mathcal A:=\bC[t_1^{\pm1}, \ldots, t_\nu^{\pm1}]$ be the Laurent polynomial algebras. Then we have the Lie algebras $\mathcal W_\nu ={\rm Der} (\mathcal A)$ and the Lie algebra ${\mathcal D}_\nu={\rm Diff}(\mathcal A)$. These Lie algebras are known as the Witt algebra and the differential operator algebra of rank $\nu$. We know that $\mathcal W_1$ is the centerless Virasoro algebra and $\mathcal D_1=\mathcal D$ is the Lie algebra of the differential operators.

For $\um=(m_1, \ldots, m_\nu)\in \bZ^\nu, \un=(n_1, \ldots, n_\nu)\in \bN^\nu$, set $t^{\um} = t_1^{m_1}\ldots t_\nu^{m_\nu}, D^{\un}=D_1^{n_1}\ldots D_\nu^{n_\nu}$, where $D_i=t_i\frac{d}{dt_i}$ for all $i=1,2,\ldots, \nu$. We can write the Lie
brackets in $\mathcal W_\nu$ and $\mathcal D_\nu$ as follows:

\begin{eqnarray}
[t^{\ur}D_i,  t^{\us}D_j]&=&s_it^{\ur+\us}D_j-r_jt^{\ur+\us}D_i,\nonumber\label{gw1}
\\
\bigl[\,t^{\ur}D^{\um}, t^{\us}D^{\un}\,\bigr]
&=&\sum_{i_1=0}^{m_1}\ldots\sum_{i_\nu=0}^{m_\nu}{m_1\choose i_1}\ldots{m_\nu\choose i_\nu}\,\us^{\ui}\,t^{\ur+\us}D^{m+n-i}\nonumber
\\
&-&\sum_{j_1=0}^{n_1}\ldots\sum_{j_\nu=0}^{n_\nu}{n_1\choose i_1}{n_2\choose i_2}\ldots{n_\nu\choose i_\nu}\,\ur^{\uj}\,t^{\ur+\us}D^{ m+n-j}\label{gd1}
\end{eqnarray}
for all $\ur, \us\in\mathbb Z^\nu, \um, \un\in\mathbb N^\nu, \ui=(i_1, \ldots, i_\nu), \uj=(j_1, \ldots, j_\nu)$.

It is known that $\frak h_\nu={\rm span}_{\mathbb{C}}\{D_1, D_2, \ldots, D_\nu\}$ is the Cartan subalgebra of $\mathcal W_\nu$ and $\mathcal D_\nu$.

\begin{lemm} \label{lemm41}
The Lie algebra $\mathcal D_\nu$ is generated by $\{t_i, t_i^{-1}, D_iD_j\mid i, j=1,2, \ldots, \nu\}$.

\end{lemm}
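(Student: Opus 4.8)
The plan is to reduce the claim for $\mathcal D_\nu$ to the rank-one statement (Lemma \ref{lemm11}), i.e. that $\mathcal D = \mathcal D_1$ is generated by $\{t, t^{-1}, D^2\}$, applied coordinate by coordinate, together with an argument that the ``mixed'' generators $D_iD_j$ with $i\neq j$ and the mixed monomials $t^{\ur}D^{\un}$ with support in several coordinates are recovered from bracketing. Let $\mathfrak g$ denote the Lie subalgebra of $\mathcal D_\nu$ generated by $S=\{t_i, t_i^{-1}, D_iD_j\mid 1\le i,j\le \nu\}$; I must show $\mathfrak g = \mathcal D_\nu$, i.e. $t^{\ur}D^{\un}\in\mathfrak g$ for all $\ur\in\bZ^\nu$, $\un\in\bN^\nu$.

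\emph{First} I would handle each single coordinate. Fix $i$ and consider the subalgebra $\mathcal D^{(i)}\cong \mathcal D$ spanned by $\{t_i^{m}D_i^{n}\mid m\in\bZ, n\in\bN\}$; since $t_i,t_i^{-1},D_i^2\in S$, Lemma \ref{lemm11} gives $\mathcal D^{(i)}\subseteq\mathfrak g$. In particular each $D_i\in\mathfrak g$ (it appears as $t_i\cdot t_i^{-1}$-type bracket, or directly: $D_i = [D_i^2, t_i]t_i^{-1}$-style computation lives inside $\mathcal D^{(i)}$), and each $t_i^{m}D_i^{n}\in\mathfrak g$. \emph{Next}, from $D_i\in\mathfrak g$ and $t_j^{s}\in\mathcal D^{(j)}\subseteq\mathfrak g$ I recover $t^{\ur}$ for arbitrary $\ur$ by $[D_i, t_j^{s}] = \delta_{ij}s\,t_j^{s}$, which is the wrong bracket for mixing, so instead I use the product structure: $t^{\ur} = t_1^{r_1}\cdots t_\nu^{r_\nu}$ is obtained from repeated brackets such as $[t^{\ur'}D_j, t_j^{r_j}] = r_j\, t^{\ur'+r_j e_j}$ once I know $t^{\ur'}D_j\in\mathfrak g$; so the real task is to produce all $t^{\ur}D_j$ and then all $t^{\ur}D^{\un}$.

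\emph{The inductive core} is a double induction: on the size of the support of $\ur$ (number of nonzero coordinates) and on $|\un|=n_1+\cdots+n_\nu$. The base $|\un|\le 1$: from $\mathcal D^{(i)}$ I have $t_i^{r_i}D_i\in\mathfrak g$; bracketing $[t_i^{r_i}D_i, t_j^{r_j}] = r_j t_i^{r_i}t_j^{r_j}D_i$ (for $i\neq j$, $r_j\neq 0$) and then $[t_i^{r_i}t_j^{r_j}D_i, t_j^{s}]$, etc., builds every $t^{\ur}D_k$; combined with $[D_k D_l, t^{\ur}]$-type brackets and the single-coordinate $t_i^{m}D_i^{n}$ I get the mixed $D_kD_l$-free case. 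For the induction step, given $t^{\ur}D^{\un}\in\mathfrak g$ for all $\un$ with $|\un|\le k$, I bracket with the known low-degree elements ($t_j^{s}D_j^2$, or $t_j^{\pm1}$, $D_jD_l$) and read off the leading term in the binomial expansion of (\ref{gd1}); the Vandermonde/degree-in-$s$ trick (separate the polynomial-in-$s$ coefficients by choosing enough values of $s$, exactly as in the proofs of Theorems \ref{main1} and \ref{51}) isolates $t^{\ur'}D^{\un'}$ with $|\un'|=k+1$ from the combination. This is entirely parallel to how \eqref{eqd2}, \eqref{eqd3} and Claim 4 of Theorem \ref{main1} were bootstrapped from the bracket $[D^2, t^m D^k]=m^2 t^m D^k + 2m t^m D^{k+1}$, only now carrying multi-indices.

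\emph{The main obstacle} I expect is bookkeeping in the multi-index binomial expansion (\ref{gd1}): isolating a single target monomial $t^{\ur}D^{\un}$ from a bracket forces me to peel off, one coordinate at a time, the lower-order terms ${m_l\choose i_l}s_l^{i_l}$ with $i_l\ge 1$, and to argue these lower-order terms are already in $\mathfrak g$ by the inductive hypothesis (smaller $|\un|$) before concluding the top term is too. Making the induction ordering precise — so that every term subtracted off is genuinely ``earlier'' — is the delicate point; once the ordering is fixed (lexicographic on $(\text{support size of }\ur,\ |\un|)$, with the Vandermonde separation handling fixed-degree families), the individual computations are routine and mirror Lemma \ref{lemm11} applied slotwise. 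I would therefore present the proof as: (1) $\mathcal D^{(i)}\subseteq\mathfrak g$ for each $i$ via Lemma \ref{lemm11}; (2) all $t^{\ur}D_j\in\mathfrak g$ by bracketing single-coordinate elements with pure powers $t_l^{s}$; (3) induction on $|\un|$ using brackets with $t_j^{s}D_j^2$ and the Vandermonde separation to climb from degree $k$ to $k+1$; (4) conclude $\mathfrak g=\mathcal D_\nu$.
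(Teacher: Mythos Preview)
Your overall strategy --- reduce to the rank-one case (Lemma~\ref{lemm11}) coordinate by coordinate and then inductively mix coordinates using the bracket formula \eqref{gd1} --- is exactly what the paper has in mind; its proof is the single line ``It follows by Lemma~\ref{lemm11} and \eqref{gd1}.''

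There is, however, a concrete error in your step~(2): for $i\neq j$ the bracket $[t_i^{r_i}D_i,\, t_j^{r_j}]$ equals $0$, not $r_j\, t_i^{r_i}t_j^{r_j}D_i$, since $D_i$ commutes with $t_j$. More generally the subalgebras $\mathcal D^{(i)}$ for distinct $i$ mutually commute, so no bracketing of elements drawn only from $\bigcup_i \mathcal D^{(i)}$ ever produces an element with support in two coordinates. This is precisely why the mixed generators $D_iD_j$ with $i\neq j$ are in the generating set and are not redundant. You do mention ``$[D_kD_l,\, t^{\ur}]$-type brackets'' a few lines later, and that is indeed the correct mechanism: for instance $[D_iD_j,\, t_i^{r}] = r\, t_i^{r}D_j$ produces the first genuinely mixed element, after which $[t_i^{r}D_j,\, t_j^{s}D_j] = s\, t_i^{r}t_j^{s}D_j$, etc., and your induction on the support of $\ur$ and on $|\un|$ can proceed as you describe. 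So the plan is sound once you replace the erroneous bracket by the $D_iD_j$ bracket you already list, and make explicit that step~(2) \emph{requires} those mixed generators rather than treating them as an afterthought.
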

\begin{proof}
It follows by Lemma \ref{lemm11} and \eqref{gd1}.
\end{proof}

In \cite{TZ}, a class of $\mathcal W_\nu$-modules  were defined as follows,
where $1\le\nu<\infty$. For  any $\a\in \C$ and
$\Lambda=(\lambda_1,\lambda_2,\ldots,\lambda_\nu)\in (\C^*)^n$,
denote by $\Omega(\Lambda, \alpha)=\C[x_1,x_2,\ldots,$
 $x_\nu]$ the polynomial  algebra over $\C$ in commuting indeterminates $x_1,x_2,\ldots,
 x_\nu.$ The action of $\mathcal W_\nu$ on $\Omega(\Lambda, \a)$ is defined by
\begin{equation}t^{\um}D_i \cdot
f(x_1,\ldots,x_\nu)=\Lambda^m(x_i-\a m_i)f(x_1-m_1,\ldots,x_\nu-m_\nu),\label{gwdef}\end{equation}
where $\um=(m_1,m_2,\ldots,m_\nu)\in \Z^\nu, f(x_1,\ldots,x_\nu)\in \mathcal A,
\Lambda^{\um}=\lambda_1^{m_1}\lambda_2^{m_2}\ldots\lambda_\nu^{m_\nu},$
$i=1,2,\ldots,\nu.$
 By \cite{TZ}, we know that $\Omega(\Lambda, \a)$ is irreducible if and only if $\a\ne 0.$ The following result was given in \cite{TZ}.

\begin{theo}\cite{TZ}\label{gwp}  Let $M$ be a $\mathcal W_\nu$-module which is a free $U(\frak h_\nu)$-module of  rank $1$. Then
$M\cong \Omega(\Lambda, \a)$ for some $\Lambda\in (\C^*)^\nu$ and some $\a\in\C.$
\end{theo}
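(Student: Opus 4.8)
The plan is to realize $M$ concretely, reduce the unknown action to a system of functional equations in a single family of polynomials, and then solve that system by feeding the rank-one Virasoro classification (Theorem \ref{Vir}) into each coordinate direction.

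First I would fix a free generator $v$ and identify $M=U(\frak h_\nu)v\cong\bC[x_1,\ldots,x_\nu]$ via $D_i\mapsto x_i\cdot$, so that $v$ becomes $1$ and each $D_i$ acts as multiplication by $x_i$. From the $\mathcal W_\nu$-bracket $[D_j,t^{\ur}D_i]=r_j\,t^{\ur}D_i$ one reads off, as operators on $M$, the relation $(t^{\ur}D_i)\circ(x_j\cdot)=(x_j-r_j)\cdot\circ(t^{\ur}D_i)$; hence $(t^{\ur}D_i)\circ(f(x)\cdot)=f(x-\ur)\cdot\circ(t^{\ur}D_i)$ for every $f$, and applying this to $1$ gives the \emph{shift form}
$$t^{\ur}D_i\cdot f(x)=h_{\ur,i}(x)\,f(x-\ur),\qquad h_{\ur,i}(x):=t^{\ur}D_i\cdot 1\in\bC[x].$$
Then $h_{\underline 0,i}(x)=x_i$, and the whole module structure is encoded in the family $\{h_{\ur,i}\}$. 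Substituting the shift form into $[t^{\ur}D_i,t^{\us}D_j]=s_i\,t^{\ur+\us}D_j-r_j\,t^{\ur+\us}D_i$ and comparing the coefficients of $f(x-\ur-\us)$ yields the master functional equation
$$h_{\ur,i}(x)\,h_{\us,j}(x-\ur)-h_{\us,j}(x)\,h_{\ur,i}(x-\us)=s_i\,h_{\ur+\us,j}(x)-r_j\,h_{\ur+\us,i}(x),$$
valid for all $\ur,\us\in\bZ^\nu$ and all $i,j$. The problem is now purely combinatorial: find every solution with $h_{\underline 0,i}(x)=x_i$.

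Next, fixing a direction $i$, I would restrict to the subalgebra $\mathcal W^{(i)}=\mathrm{span}_\bC\{t^{k\underline{e}_i}D_i:k\in\bZ\}$, where $\underline{e}_i$ is the $i$-th standard basis vector; it is a copy of the centerless Virasoro algebra, and under it $M$ shifts only the variable $x_i$. Extending scalars to the field $\bC(x_{\hat i})$ of rational functions in the remaining variables, $\bC(x_{\hat i})\otimes_{\bC[x_{\hat i}]}M=\bC(x_{\hat i})[x_i]$ becomes a $\mathcal W^{(i)}$-module that is free of rank one over $\bC(x_{\hat i})[x_i]=U(\bC D_i)$ with $D_i$ torsion-free. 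The argument proving Theorem \ref{Vir} uses only field arithmetic and the polynomial structure, so it applies verbatim over $\bC(x_{\hat i})$ and gives
$$h_{k\underline{e}_i,i}(x)=\lambda_i^{\,k}\,(x_i-k\alpha_i),\qquad k\in\bZ,$$
with a priori $\lambda_i\in\bC(x_{\hat i})^{\ast}$ and $\alpha_i\in\bC(x_{\hat i})$; taking $k=1$ together with $h_{k\underline{e}_i,i}\in\bC[x]$ shows $\lambda_i$ and $\lambda_i\alpha_i$ lie in $\bC[x_{\hat i}]$.

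Finally I would pin down these parameters and propagate to all of $\mathcal W_\nu$. Feeding $\ur=\underline{e}_i$, $\us=\underline{e}_j$ with $i\neq j$ into the master equation makes the right-hand side vanish, forcing $h_{\underline{e}_i,i}(x)\,h_{\underline{e}_j,j}(x-\underline{e}_i)=h_{\underline{e}_j,j}(x)\,h_{\underline{e}_i,i}(x-\underline{e}_j)$; comparing degrees and coefficients in this identity, together with the companion ones coming from $[t^{\underline{e}_j}D_i,t^{-\underline{e}_j}D_j]$, forces each $\lambda_i$ to be a nonzero constant, each $\alpha_i$ to be a constant, all the $\alpha_i$ to coincide with a single $\alpha\in\bC$, and $h_{\underline{e}_j,i}(x)=\lambda_j x_i$ for $i\neq j$. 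Writing $\Lambda=(\lambda_1,\ldots,\lambda_\nu)$, an induction on $|\um|=\sum_k|m_k|$, using the master equation to express $h_{\um,i}$ through $h$'s of smaller support, then gives $h_{\um,i}(x)=\Lambda^{\um}(x_i-\alpha m_i)$ for all $\um$ and $i$; comparison with \eqref{gwdef} identifies $v\mapsto 1$ as an isomorphism $M\cong\Omega(\Lambda,\alpha)$. \textbf{The main obstacle} I anticipate is the degree control in the functional equations: since shifting a polynomial never alters its top-degree part, the highest-degree terms on the left-hand side of the master equation cancel automatically, so ruling out spurious high-degree or $x_{\hat i}$-dependent solutions and proving that $\lambda_i,\alpha_i$ are honest constants requires a careful analysis of the subleading terms rather than a one-line degree count.
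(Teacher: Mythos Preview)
The paper does not prove this theorem: it is quoted from \cite{TZ} and stated without argument, so there is no proof in the present paper to compare your proposal against.

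For what it is worth, your outline---identifying $M$ with $\bC[x_1,\ldots,x_\nu]$, deriving the shift form $t^{\underline r}D_i\cdot f(x)=h_{\underline r,i}(x)\,f(x-\underline r)$ from $[D_j,t^{\underline r}D_i]=r_j\,t^{\underline r}D_i$, and then analysing the functional equations coming from the remaining brackets---is the same overall strategy as in \cite{TZ}. Your base change to $\bC(x_{\hat i})$ in order to import the one-variable classification (Theorem~\ref{Vir}) is a reasonable shortcut, and it is legitimate since the proof of Theorem~\ref{Vir} uses only field arithmetic. Note that $\lambda_i\in\bC^*$ then falls out immediately: $h_{\pm\underline{e}_i,i}\in\bC[x]$ forces $\lambda_i$ and $\lambda_i^{-1}$ both to lie in $\bC[x_{\hat i}]$, so $\lambda_i$ is a unit there. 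The part you correctly flag as the obstacle---showing each $\alpha_i$ is independent of the remaining variables and that all the $\alpha_i$ coincide---is not actually carried out in your sketch, and that step does require the subleading-term analysis you anticipate rather than a top-degree count.
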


For $\epsilon=0, 1$,  $\Lambda=(\lambda_1,\lambda_2,\ldots,\lambda_\nu)\in (\C^*)^\nu$, we define a $\mathcal D_\nu$-module  $\Omega(\Lambda, \epsilon)$ as follows:
Denote by $\Omega(\Lambda, \epsilon)=\C[x_1,x_2,\ldots,$
 $x_\nu]$ the polynomial  algebra over $\C$ in commuting indeterminates $x_1,x_2,\ldots,
 x_\nu$. The action of $\mathcal D_\nu$ on $\Omega(\Lambda, \epsilon)$ is defined by
\begin{equation*}
t^{\underline m}D^{\un} \cdot
f(x_1,\ldots,x_\nu)
=\beta^{1-\sum_{i=1}^\nu n_i}\Lambda^m\Pi_{j=1}^\nu(x_j-\epsilon m_j)^{n_j}f(x_1-m_1,\ldots,x_\nu-m_\nu)
\end{equation*}
for any $ f(x_1,\ldots,x_\nu)\in \mathcal A$, where $\beta=(-1)^{1-\epsilon}$, $\um=(m_1,m_2,\ldots,m_\nu)\in \Z^\nu, \un=(n_1,n_2,\ldots, n_\nu)\in \bN^\nu,$
$i=1,2,\ldots, \nu$.
\begin{lemm}\label{lemma42} $\Omega(\Lambda, \epsilon)$ is an irreducible $\mathcal D_\nu$-module.
\end{lemm}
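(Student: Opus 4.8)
The plan is to reduce the general-rank statement to the two things already in hand: the rank-one case (Lemma \ref{lemma31}) and the well-definedness calculation underlying it. First I would verify that $\Omega(\Lambda,\epsilon)$ really is a $\mathcal D_\nu$-module. The bracket relation \eqref{gd1} is a ``product'' of $\nu$ copies of the rank-one relation, one for each index; since the variables $x_1,\dots,x_\nu$ commute and the action on $\Omega(\Lambda,\epsilon)$ factors through $\Pi_{j}(x_j-\epsilon m_j)^{n_j}$ together with the shift $f(x_1,\dots,x_\nu)\mapsto f(x_1-m_1,\dots,x_\nu-m_\nu)$, the key polynomial identity needed is exactly the one proved in the course of Lemma \ref{lemma31}, applied coordinatewise and then multiplied. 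Concretely, the operator $t^{\um}D^{\un}$ acts as $\beta\prod_{j=1}^\nu\bigl(\beta^{-1}(x_j-\epsilon m_j)^{n_j}\bigr)$ composed with the translation by $\um$, so checking the Lie bracket amounts to expanding both sides of \eqref{gd1} as in the proof of Lemma \ref{lemma31}, where each factor indexed by $j$ obeys the identity
\begin{eqnarray*}
&&(x_j-\epsilon m_j+(\beta-\epsilon)m_j')^{n_j}(x_j-\epsilon m_j-\epsilon m_j')^{n_j'}-(x_j-\epsilon m_j'+(\beta-\epsilon)m_j)^{n_j'}(x_j-\epsilon m_j-\epsilon m_j')^{n_j}\\
&=&\beta(x_j-\epsilon m_j)^{n_j}(x_j-m_j-\epsilon m_j')^{n_j'}-\beta(x_j-\epsilon m_j')^{n_j'}(x_j-m_j'-\epsilon m_j)^{n_j}.
\end{eqnarray*}
Taking the product over $j=1,\dots,\nu$ of the left-hand forms and of the right-hand forms and tracking the power $\beta^{2-\sum n_j-\sum n_j'}$ gives precisely the compatibility; I would not grind through this expansion in full, only indicate that it is the $\nu$-fold analogue of the displayed computation in Lemma \ref{lemma31}.

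Next, for irreducibility, I would argue exactly as in Lemma \ref{lemma31}. Let $N$ be a nonzero submodule and pick $0\neq f\in N$. Applying the diagonal operators $t^{\underline 0}D^{\un}$ (i.e. $m_j=0$ for all $j$), which act on $\Omega(\Lambda,\epsilon)$ by multiplication by $\beta^{1-\sum n_j}\prod_j x_j^{n_j}$, one sees that $N$ is stable under multiplication by the subalgebra of $\C[x_1,\dots,x_\nu]$ generated by monomials; since that is all of $\C[x_1,\dots,x_\nu]$, $N$ is an ideal of the polynomial ring containing a nonzero element, hence contains a nonzero constant after multiplying by a suitable monomial — more carefully, one uses that $\C[x_1,\dots,x_\nu]f$ is a nonzero ideal, and then that the operators $t^{\um}D^{\un}$ with $\um\neq 0$, being invertible up to the nonzero scalar $\beta^{1-\sum n_j}\Lambda^{\um}$ and the shift, move any nonzero polynomial to a scalar multiple of $1$ by choosing $\un=0$ and an appropriate $\um$ to clear the shift. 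Since $\Lambda\in(\C^*)^\nu$, nothing degenerates, and one concludes $1\in N$, whence $N=\Omega(\Lambda,\epsilon)$.

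The only mild obstacle is bookkeeping: making sure the exponent of $\beta$ matches when one multiplies $\nu$ rank-one identities (each contributes a factor $\beta$ on the right, so the product contributes $\beta^\nu$, which must be reconciled with the prefactor $\beta^{1-\sum n_i}$ appearing in the definition — the check is that $\beta^2=1$, so powers of $\beta$ only matter mod $2$, and the parity works out as in the rank-one case). I would therefore phrase the proof as: ``The verification that $\Omega(\Lambda,\epsilon)$ is a module is entirely analogous to Lemma \ref{lemma31}, carried out coordinatewise using \eqref{gd1}; irreducibility follows as in Lemma \ref{lemma31} since the diagonal operators realize multiplication by every polynomial and $\Lambda\in(\C^*)^\nu$ makes every $t^{\um}$ act invertibly.'' This keeps the argument short and defers to the rank-one computation already spelled out.
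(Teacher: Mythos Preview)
Your overall plan---reduce to Lemma \ref{lemma31} and check directly---is exactly what the paper does; its proof is the single sentence ``It follows by Lemma \ref{lemma31} and some direct calculations.'' But two of the concrete steps in your sketch do not work as written.

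First, the module verification. You propose to deduce the $\nu$-variable identity by ``taking the product over $j=1,\dots,\nu$ of the left-hand forms and of the right-hand forms.'' This is invalid as stated: a product of differences $\prod_j(A_j-B_j)$ is not a difference of products $\prod_j A_j-\prod_j B_j$, and the bracket relation you need is of the latter type. What actually makes the check go through is a stronger termwise fact: for $\epsilon=1$ (so $\beta=1$) one has, factor by factor, $(x_j-\epsilon m_j+(\beta-\epsilon)m_j')^{n_j}(x_j-\epsilon(m_j+m_j'))^{n_j'}=(x_j-\epsilon m_j)^{n_j}(x_j-m_j-\epsilon m_j')^{n_j'}$, so the two products coincide term by term; for $\epsilon=0$ (so $\beta=-1$) the two terms on each side \emph{swap}, and the global sign $\beta$ absorbs the resulting minus. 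Either say this, or simply substitute $\epsilon=0,1$ and check the multi-variable identity directly, as in Lemma \ref{lemma31}.

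Second, the irreducibility argument. Applying $t^{\um}$ with $\un=0$ sends $f(x)$ to $\beta\Lambda^{\um}f(x-\um)$, which is a translate of $f$, not a constant; there is no choice of $\um$ that ``clears the shift'' to produce $1$. The clean fix is finite differences: since $f(x)$ and $f(x-e_i)$ both lie in the submodule $N$ (because $\Lambda\in(\C^*)^\nu$), so does $f(x)-f(x-e_i)$, which has strictly lower degree in $x_i$. Iterating in each variable drives you down to a nonzero constant in $N$; then applying $D^{\un}$, which acts as multiplication by $\beta^{1-\sum n_j}\prod_j x_j^{n_j}$, gives every monomial, hence $N=\Omega(\Lambda,\epsilon)$.
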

\begin{proof} It follows by Lemma \ref{lemma31} and some direct calculations.
\end{proof}
\begin{theo} \label{main3} Let $M$ be a $\mathcal D_\nu$-module which is a free $U(\frak h_\nu)$-module of rank $1$. Then
$M\cong \Omega(\Lambda, \epsilon)$ for some $\Lambda\in (\C^*)^\nu, \epsilon=0, 1$.
\end{theo}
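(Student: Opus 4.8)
The plan is to mimic the one-variable argument of Theorem~\ref{main1}, bootstrapping from the known classification of $U(\frak h_\nu)$-free modules over the Witt algebra $\mathcal W_\nu$ (Theorem~\ref{gwp}). Let $M=U(\frak h_\nu)v=\C[D_1,\ldots,D_\nu]v$ be free of rank $1$. Since $\mathcal W_\nu\subset\mathcal D_\nu$ and $\frak h_\nu$ is the Cartan subalgebra of both, the restriction of $M$ to $\mathcal W_\nu$ is again a free $U(\frak h_\nu)$-module of rank $1$, so by Theorem~\ref{gwp} there exist $\Lambda\in(\C^*)^\nu$ and $\a\in\C$ with
\begin{equation*}
t^{\um}D_i\cdot f(x_1,\ldots,x_\nu)=\Lambda^{\um}(x_i-\a m_i)f(x_1-m_1,\ldots,x_\nu-m_\nu),\qquad t^{\um}\cdot f=\beta\,\Lambda^{\um}f(x_1-m_1,\ldots,x_\nu-m_\nu)
\end{equation*}
for some $\beta\in\C$ (the action of $t^{\um}=t^{\um}D^{\underline 0}$ on the rank-one $U(\frak h_\nu)$-module is multiplication by a polynomial, and commuting with the $t^{\um}D_i$ forces it to have this shift-by-scalar form). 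First I would rule out $\beta=0$: from $[D_i^2,t^{\um}]=2t^{\um}D_i+t^{\um}$ (the $i$-th one-variable relation inside $\mathcal D_\nu$), $\beta=0$ would force all $t^{\um}D_i$ to act as zero and hence all of $\mathcal D_\nu$ to act trivially, contradicting freeness. So $\beta\ne 0$.

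Next I would pin down $D^{\un}$ acting on $v$. Since $M$ is $\C[D_1,\ldots,D_\nu]$-free, writing $D^{\un}v=g_{\un}(D_1,\ldots,D_\nu)v$ reduces everything to identities among polynomials; the bracket relations force these $g_{\un}$ to be determined. I would first handle each variable separately: fixing $i$ and restricting attention to the subalgebra generated by $t_i,t_i^{-1},D_i^2$ (a copy of $\mathcal D=\mathcal D_1$ by Lemma~\ref{lemm11}), the argument of Theorem~\ref{main1} applies verbatim to yield $\a\in\{0,1\}$ and $\beta=(-1)^{1-\a}$ (so I rename $\a=\epsilon$), and gives $D_i^{n}\cdot f=\beta^{1-n}x_i^{\,n}f$ and $t^{\um}D_i^{n}\cdot f=\beta^{1-n}\Lambda^{\um}(x_i-\epsilon m_i)^{n}f(x_1-m_1,\ldots,x_\nu-m_\nu)$. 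The crucial point is that the $\epsilon$ extracted this way is the \emph{same} for all $i$, which is automatic because $\beta$ is a single scalar and $\beta=(-1)^{1-\epsilon}$ determines $\epsilon$. Then, for mixed monomials, I would induct on the total degree $|\un|=\sum n_j$: using relations of the form $[D_i^2,t^{\um}D^{\un}]=m_i^2\,t^{\um}D^{\un}+2m_i\,t^{\um}D_iD^{\un}$ (read off from \eqref{gd1}) one raises the exponent in the $i$-th slot, exactly as in Claims~4 and~5 of Theorem~\ref{main1}, to obtain
\begin{equation*}
t^{\um}D^{\un}\cdot f(x_1,\ldots,x_\nu)=\beta^{1-\sum_{j=1}^\nu n_j}\,\Lambda^{\um}\,\prod_{j=1}^\nu (x_j-\epsilon m_j)^{n_j}\,f(x_1-m_1,\ldots,x_\nu-m_\nu),
\end{equation*}
first for $\um\ne\underline 0$ and then, via a bracket such as $[t^{-1}_iD^{\un},t_iD^{2}_i]$ analogous to the one used in Claim~5, for $\um=\underline 0$ as well. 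This is precisely the action defining $\Omega(\Lambda,\epsilon)$, so $M\cong\Omega(\Lambda,\epsilon)$.

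The main obstacle is bookkeeping rather than conceptual: the mixed-derivative induction in Claim~4/5 must be organized so that raising the exponent in one slot does not disturb the already-established form in the other slots, and one must check that the scalar $\beta$ coming from the $\mathcal W_\nu$-classification is consistent with the scalar forced independently in each one-variable subalgebra $\langle t_i,t_i^{-1},D_i^2\rangle$. I expect these to go through by the same Vandermonde/degree-comparison manipulations used in Section~3, together with Lemma~\ref{lemma42} which already verifies that the resulting formula does define a module; hence I would not reproduce the computations in full but cite Lemmas~\ref{lemm11}, \ref{lemm41}, \ref{lemma42} and the proof of Theorem~\ref{main1} for the single-variable steps. \QED
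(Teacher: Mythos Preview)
Your proposal is correct and follows the same strategy as the paper: restrict to $\mathcal W_\nu$ to invoke Theorem~\ref{gwp}, then to each one-variable copy of $\mathcal D$ to invoke Theorem~\ref{main1}, and finally handle mixed monomials. The only difference is in that last step: instead of a full induction on $|\un|$, the paper computes just $t^{\um}D_i^2$ (via $[t^{\um}D_i,D_i^2]$) and then $D_iD_j$ (via $[t_i^{-1}D_i^2,t_iD_j]$), after which Lemma~\ref{lemm41} finishes immediately since $\{t_i,t_i^{-1},D_iD_j\}$ already generates $\mathcal D_\nu$ --- this sidesteps exactly the mixed-derivative bookkeeping you flagged as the main obstacle.
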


\begin{proof} By Theorem \ref{main1},  we get
\begin{equation}\label{G16}
t_i^{m_i}D_i^{n_i} \cdot
f(x_1,\ldots, x_\nu)
=\beta^{1-n_i}\lambda_i^{m_i}(x_i-\epsilon m_i)^{n_i}f(x_1,\ldots, x_i-m_i, \ldots,  x_\nu)
\end{equation}
for any $ f(x_1,\ldots,x_\nu)\in \mathcal A$, where $\epsilon\in\{0,1\}$, $\beta=(-1)^{1-\epsilon}$, $m_i\in\bZ$, and $\lambda_i\in\mathbb{C}^{*}$, $i=1, \ldots,\nu$.

Moreover, in this case, \eqref{gwdef} becomes
\begin{equation}\label{G17}
t^{\um}D_i\cdot
f(x_1,\ldots, x_\nu)=\Lambda^{\um}(x_i-\epsilon m_i)f(x_1-m_1,\ldots, x_i-m_i, \ldots,  x_\nu-m_\nu)
\end{equation}
for any $f(x_1,\ldots,x_\nu)\in \mathcal A$,
where $\um=(m_1, m_2,\ldots, m_\nu)\in \Z^\nu$, $\Lambda^{\um}=\lambda_1^{m_1}\ldots\lambda_\nu^{m_\nu}$,
$i=1,2,\ldots, \nu$.

By $[t^{\um}D_i, D_i^2]=-2m_it^{\um}D_i^2-m_i^2t^{\um}D_i$, using (\ref{G16}) and (\ref{G17}), we get
\begin{equation}\label{G18}
t^{\um}D_i^2 \cdot
f(x_1,\ldots, x_\nu)
=\beta^{-1}\Lambda^{\um}(x_i-\epsilon m_i)^2f(x_1-m_1,\ldots, x_i-m_i, \ldots,  x_\nu-m_\nu).
\end{equation}
For any $1\le i\ne j\le \nu$, by $[t_i^{-1}D_i^2, t_iD_j]=2D_iD_j+D_j$, using (\ref{G17}) and (\ref{G18}), we can obtain
\begin{equation*}
D_iD_j\cdot
f(x_1,\ldots, x_\nu)=\beta^{-1}x_ix_jf(x_1,\ldots,  x_\nu), \ \text {for all}\ f(x_1,\ldots,x_\nu)\in \mathcal A.
\end{equation*}
By Lemma \ref{lemm41}, the theorem holds.
\end{proof}

In conclusion, a kind of new simple modules over the Lie algebra of differential operators, which are non-weight modules,  are constructed by classical methods in this paper (Theorem \ref{main1} and Theorem \ref{main3}).
Certainly, the study of non-weight module over the Lie algebra of differential operators is still in its infancy.

\vskip30pt \noindent{\bf Acknowledgments}

This work is partially supported by the NNSF (Nos. 12071405, 11971315, 11871249,12171155), and is partially supported by Xinjiang Uygur Autonomous Region graduate scientific research innovation project (No. XJ2021G021).

\vskip10pt \noindent{\bf  Data availability}  Data sharing not applicable to this paper.

\vskip10pt \noindent{\bf Conflict of interest} The authors declared that they have no conflicts of interest to this work.

\vskip10pt

\end{document}